\documentclass[11pt]{article}

\usepackage{url}
\usepackage[matrix,arrow,curve]{xy} 
\usepackage{a4wide}
\usepackage{amsmath,amssymb,amsthm}
\usepackage{tikz}
\usepackage{manfnt}
\usepackage{epsfig}
\usepackage{amssymb}
\usetikzlibrary{matrix}
\def\Z{{\Bbb Z}}
\def\N{{\Bbb N}}
\def\R{{\Bbb R}}
\def\diS1{\mbox{$\overrightarrow{\mathbb{S}^1}$}}
\def\diO1{\mbox{$\overrightarrow{\mathbb{O}^1}$}}
\def\diOo{\mbox{${\mathbb{O}^1}$}}
\def\diO2{\mbox{$\overrightarrow{\mathbb{O}^2}$}}

\newcommand\diSk[1]{\mbox{$\overrightarrow{\mathbb{S}^{#1}}$}}
\def\diSn{\diSk{n}}
\def\S1{\mbox{${\mathbb{S}^1}$}}

\newcommand\ab[0]{\textbf{Ab}}
\newcommand\sysh[2]{\overrightarrow{H}_{#1}(#2)}

\newcommand\trace[3]{\overrightarrow{\mathfrak{T}}(#1)(#2,#3)}
\newcommand\T[0]{\mathcal{T}}

\newcommand\M[0]{\mathcal{M}}

\bibliographystyle{apalike}

\usepackage{tkz-graph}
\GraphInit[vstyle = Shade]
\tikzset{
  LabelStyle/.style = { rectangle, rounded corners, draw,
                        minimum width = 2em, fill = yellow!50,
                        text = red, font = \bfseries },
  VertexStyle/.append style = { inner sep=5pt,
                                font = \Large\bfseries},
  EdgeStyle/.append style = {->, bend left} }

\newcommand\pth[1]{P#1}
\newcommand\diTC[1]{\overrightarrow{\sf TC}(#1)}

\newcommand\map[3]{#1 : #2 \rightarrow #3}
\newtheorem{lemma}{Lemma}
\newtheorem{theorem}{Theorem}
\newtheorem{proposition}{Proposition}
\newtheorem{corollary}{Corollary}
\newtheorem{example}{Example}
\newtheorem{definition}{Definition}

\newcommand\comment[1]{}

\theoremstyle{remark}
\newtheorem*{remark}{Remark}

\newcommand{\tc}{{\sf {TC}}}
\newcommand{\C}{\Bbb C}

\newcommand\ForAuthors[1]%          %  temporary remark for the
 {\par\smallskip                     %  authors:
  \begin{center}%                    %
   \fbox%                            %    --------
   {\parbox{0.9\linewidth}%          %    |  #1  |
    {\raggedright--- #1}%         %    --------
   }%                                %
  \end{center}%                      %
  \par\smallskip                     %
 }        

\title{Directed topological complexity}
\author{%St\'ephane Gaubert\footnote{CMAP\& INRIA, Ecole Polytechnique, CNRS, INRIA, Universit\'e Paris-Saclay, 91128 Palaiseau, France stephane.gaubert@polytechnique.edu}, 
Eric Goubault\footnote{LIX, Ecole Polytechnique, CNRS, Universit\'e Paris-Saclay, 91128 Palaiseau, France, goubault@lix.polytechnique.fr}, Aur\'elien Sagnier\footnote{LIX \& CMAP, Ecole Polytechnique, CNRS, Universit\'e Paris-Saclay, 91128 Palaiseau, France, aurelien.sagnier@polytechnique.edu}, Michael Farber\footnote{School of Mathematical Sciences,
Queen Mary University of London, United Kingdom, 
m.farber@qmul.ac.uk}}

%\date{\today}

\begin{document}

\maketitle

\begin{abstract}
It has been observed that the very important motion planning problem of robotics mathematically speaking boils down to the  problem of finding a section to the path-space fibration, raising the notion of topological complexity, as introduced by M. Farber. 
The above notion fits the motion planning problem of robotics when there are no constraints on the actual control that can be applied to the physical apparatus. In many applications, however, a physical apparatus may have constrained controls, leading to constraints on its potential future dynamics. 
In this paper we adapt the notion of topological complexity to the case of directed topological spaces, which encompass such controlled systems, and also systems which appear in concurrency theory. We study its first properties, make calculations for some interesting classes of spaces, and show applications to a form of directed homotopy equivalence.
\end{abstract}

\paragraph{Keywords}
Directed topology, topological complexity, controlled systems, homotopy theory. 

\section{Introduction}

In this paper we adapt the notion of topological complexity \cite{farber1}, \cite{farber}, to the case of directed topological spaces. Let us briefly motivate the interest in a notion of \lq\lq directed\rq\rq\, topological complexity. 
It has been observed that the very important
motion planning problem of robotics mathematically speaking boils down to the problem of
finding a section to the path-space fibration 
\begin{eqnarray}\label{1}\chi : \ X^I \rightarrow X \times X\end{eqnarray} where
$\chi(p)=(p(0),p(1))$; here $X^I$ denotes the space of all continuous paths
$p:I=[0,1]\to X$. If this section can be continuous, then the complexity $\tc(X)$ is the lowest
possible 
(equals to one), otherwise, $\tc(X)$ is defined as the minimal number of \lq\lq discontinuities\rq\rq\ that would encode
such a section. The notion of 
topological complexity is understandable both algorithmically, and topologically,
e.g. $\tc(X)=1$ is equivalent for $X$ to be contractible. 
Generally speaking, the topological complexity $\tc(X)$ is defined as the Schwartz
genus of the path space fibration.

%, i.e. is the minimal cardinal of partitions of 
%$X \times X$ into ``nice'' subspaces $F_i$ such that $s_{F_i} \ : \ F_i \rightarrow
%PX$ is continuous. 

The above definition fits the motion planning problem of robotics when there are 
no constraints on the actual control that can be applied to the physical apparatus
that is supposed to be moved from the state $a$ to the state $b$. In many applications, however, 
a physical apparatus may have dynamics that can be described as an ordinary
differential equation in the state variables $x \in \R^n$ in time $t$, and
parameterised by the control parameters $u \in \R^p$, 
\begin{eqnarray}\label{control}
\dot{x}(t)=f(t,x(t), u(t)).\end{eqnarray} 
The control parameters $u(t)$ are usually restricted 
to lie within a set $u\in U$. Equivalently, as is well-known,  one may describe the variety of trajectories of the control system 
(\ref{control}) is by using the language of differential inclusions,
\begin{eqnarray}\label{control1}
\dot{x}(t) \in F(t,x(t)),
\end{eqnarray}
where $F(t,x(t))$ is the set of all $f(t,x(t),u)$ with
$u \in U$. 
%
% and, one may
%
%not knowing the precise control law (i.e. 
%the control parameters $u$ as functions of time $t$) to be applied, the way the controlled system
%can evolve is as one of the solutions of the differential inclusion 
%$\dot{x}(t) \in F(t,x(t))$ where $F(t,x(t))$ is the set of all $f(t,x(t),u)$ with
%$u \in U$. 
Under some well-investigated conditions this differential inclusion can be
proven to have solutions, at least locally.
%, but we will not discuss
%this matter further in this paper. 
Under these conditions, the set of solutions of the differential
inclusion (\ref{control1}) naturally forms a {\it directed space}, compare  \cite{grandisbook}, see also section  \ref{sec2} below. 
%-- a very general structure 
%%
%%of directed
%%space, 
%where a preferred subset of paths is singled out, called {\it the
%directed paths}, see e.g. \cite{grandisbook}. 
We observe in this paper that the motion planning problem of robotics in the presence of control constraints equates to finding
sections to the analogue of the path space fibration  (\ref{1}), i.e. the map taking
a d-path to the pair of its end points\footnote{That map would most likely
not qualify for being called a fibration in the directed setting.}. This material is developed in the following sections where we work in the generality of directed spaces. 
In particular we introduce
the notion of a directed homotopy equivalence which has precisely, and in a certain
non technical sense, minimally, the right properties with respect to the directed
version of topological complexity.

\section{Definitions}\label{sec2}

%\paragraph{Mathematical context: }

The context of a d-space was introduced in \cite{grandisbook};   %\marginpar{I do not understand this sentence. Is the notion of a d-space distinct from the notion of a directed space?}
we will restrict ourselves later
to a more convenient category of d-spaces, that ought to be thought of as
some kind of cofibrant replacement of more general (but sometimes pathological) d-spaces.

\begin{definition}[\cite{grandisbook}]
A directed topological space, or a d-space $X=(X,PX)$ is a topological space equipped with
a set $PX$ of continuous maps $p:I \rightarrow X$ (where $I=[0,1]$ is the unit
segment with the usual topology inherited from $\R$), called directed paths or
d-paths, satisfying three axioms~: 
\begin{itemize}
\item every constant map $I\rightarrow X$ is directed;
\item $PX$ is closed under composition with continuous non-decreasing maps from $I\to I$;
\item $PX$ is closed under concatenation.
\end{itemize}
\end{definition}

Note that for a d-space $X$, the paths space $PX$ is a topological space, equipped with the compact-open topology. 

A map $f: X\to Y$ between d-spaces is {\it a d-map} if it is continuous and for any directed path $p\in PX$ 
the path $f\circ p:I\to Y$ belongs to $PY$. In other words we require that $f$ preserves directed paths. 
%that also maps elements $p$ 
%from $PX$ to elements
%$f(p)=f \circ p$ of $PY$ (i.e. they preserve directed paths). 

%\paragraph{Notations :}
%We write $PX(x,x')$ for the sub-topoological space of $PX$ of directed
%paths from a given start point $x$ to a given end point $x'$.

\begin{remark}
\label{saturation}
Given a topological space $X$ equipped with a set $D$ of paths $p : I \rightarrow X$, closed under concatenation and such that the union of the images
$p(I)$, for $p \in D$ is $X$, we call saturation $\overline{D}$
of $D$
the smallest set of paths containing $D$ that forms a d-structure on $X$. The saturation of $D$ is just made of all composites of path of $D$ with continuous and
non-decreasing maps from $I$ to $I$. 
\end{remark}

\paragraph{d-spaces in control theory.} Consider a differential inclusion 
\begin{equation}
\dot{x} \in F(x) 
\label{diffincl}
\end{equation}
\noindent where $F$ is a map from $\R^n$ to $\wp(\R^n)$, 
the set of all subsets of $\R^n$. 
%\begin{definition} 
A function $x: [0, \infty)\rightarrow \mathbb{R}^n$ 
%\marginpar{Don't we need here to require that the paths are smooth? Perhaps we are dealing here with a smooth version of a d-space. }
 is a {\em solution} of inclusion (\ref{diffincl})
if $x$ is absolutely continuous  and for almost all $t\in\mathbb{R}$
one has $\dot{x}(t)\in F(x(t)),$ see \cite{Aubin}. 
%\end{definition}
%\begin{definition}
%Filippov and Caratheodory solutions
%\end{definition}
In general, there can be many solutions to a differential inclusion. 
%Throughout the paper we denote by $S_F(x_0)$ the set of all absolutely
%continuous solutions to Equation (\ref{diffincl}) with the initial condition $x(0) =x_0$.
%following Cauchy problem:
%$$\dot{x}(t)\in F(x(t))\ \ \mbox{ for a.e. } t\geq 0, \ \ x(0)=x_0\in\mathbb{R}^n.$$ 

\begin{lemma} \cite{Aubin} 
Suppose a set-valued map $F: \mathbb{R}^n\leadsto\mathbb{R}^n$ is 
an upper semicontinuous function of $x$ and such that the set $F(x)$ is closed and convex for all $x$. 
Then there exists a solution to Equation
(\ref{diffincl}) defined on an open interval of time. 
%The set-valued map $F: \mathbb{R}^n\leadsto\mathbb{R}^n$ is a {\em Marchaud map}
%if $F$ is upper semicontinuous (in short: u.s.c.) with compact convex
%values and linear growth (that is, there is a constant $c>0$ such that
%$| F(x)| := sup\{|y| \ |\ y\in F(x)\}\leq c(1+|x|)$, for every $x$).
\end{lemma}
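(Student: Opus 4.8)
The plan is to reproduce the classical proof by Euler polygonal approximation followed by a compactness-and-closure argument. First I would localise near the initial condition $x_0$: using that $F$ is upper semicontinuous with closed values, one finds a closed ball $B=\bar B(x_0,r)$ and a constant $M>0$ with $F(x)\subseteq\bar B(0,M)$ for all $x\in B$, so that any candidate solution issued from $x_0$ stays inside $B$ for $t\in[0,T]$ provided $T\le r/M$. (This is the step where a local boundedness property of $F$ is genuinely used; if the values are merely closed and convex but possibly unbounded one first replaces $F$ by a suitable truncation.)

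Next I would construct approximate solutions. For each $n$, partition $[0,T]$ into $n$ equal subintervals with nodes $t_k$, and define a piecewise-affine arc $x_n$ inductively: at each node pick a velocity $v_k\in F(x_n(t_k))$ and set $\dot x_n=v_k$ on $(t_k,t_{k+1})$. Each $x_n$ is $M$-Lipschitz with values in $B$, so by the Arzel\`a--Ascoli theorem a subsequence converges uniformly to a Lipschitz arc $x$. The derivatives $\dot x_n$ are bounded in $L^\infty([0,T])\subseteq L^1([0,T])$, so after passing to a further subsequence $\dot x_n\rightharpoonup w$ weakly in $L^1$; integrating gives $x(t)=x_0+\int_0^t w(s)\,ds$, hence $x$ is absolutely continuous with $\dot x=w$ almost everywhere.

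The hard part will be the closure step: showing $\dot x(t)\in F(x(t))$ for almost every $t$, and it is precisely here that convexity of the values is indispensable. By Mazur's lemma, suitable convex combinations $\sum_i\lambda_i\dot x_{n_i}$ converge to $w$ strongly in $L^1$, hence pointwise almost everywhere along a subsequence. Fix such a $t$ and $\varepsilon>0$. By uniform convergence the relevant node points $x_{n}(t_{k(n,t)})$ tend to $x(t)$, so for $n$ large $\dot x_n(t)\in F(x_n(t_{k(n,t)}))\subseteq F(B(x(t),\varepsilon))$; since the convex combinations then lie in the closed convex set $\overline{\mathrm{co}}\,F(B(x(t),\varepsilon))$, the pointwise limit satisfies $w(t)\in\overline{\mathrm{co}}\,F(B(x(t),\varepsilon))$. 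Letting $\varepsilon\downarrow 0$ and invoking upper semicontinuity together with closedness and convexity of $F(x(t))$ yields $\bigcap_{\varepsilon>0}\overline{\mathrm{co}}\,F(B(x(t),\varepsilon))=F(x(t))$, so $\dot x(t)=w(t)\in F(x(t))$. Thus $x$ solves the inclusion on $[0,T]$, and since $(0,T)$ is an open interval of time the lemma follows. I would also remark that convexity cannot be dropped: without it the limiting arc only solves the relaxed inclusion $\dot x\in\overline{\mathrm{co}}\,F(x)$.
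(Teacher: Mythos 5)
The paper does not actually prove this lemma: it is imported verbatim from Aubin and Cellina's book (the \cite{Aubin} tag is the whole ``proof''), so there is no internal argument to compare yours against. Your sketch is, in substance, the standard existence proof from that reference --- Euler polygonal approximations, an equi-Lipschitz bound plus Arzel\`a--Ascoli, weak $L^1$ compactness of the derivatives, Mazur's lemma, and a closure step in which upper semicontinuity and convexity of the values are used --- and the logic is sound. Two remarks. First, as you yourself flag, the hypotheses as reproduced in the paper (values merely closed and convex) are not enough to launch the construction: you need $F$ to be nonempty-valued and locally bounded (e.g.\ compact convex values, or boundedness on compact sets, which is what the cited source actually assumes) in order to obtain the constant $M$ and the uniform Lipschitz bound on which the entire compactness argument rests; your parenthetical about truncation is the honest acknowledgement that the lemma as literally stated is a loose paraphrase of the reference. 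Second, in the closure step it is slightly cleaner to invoke upper semicontinuity in the form $F(B(x(t),\delta))\subseteq F(x(t))+\varepsilon \bar B(0,1)$, whose right-hand side is already closed and convex; then the Mazur convex combinations and their pointwise limit land in $F(x(t))+\varepsilon \bar B(0,1)$, and intersecting over $\varepsilon>0$ gives $w(t)\in F(x(t))$ directly, without having to justify the identity $\bigcap_{\varepsilon>0}\overline{\mathrm{co}}\,F(B(x(t),\varepsilon))=F(x(t))$ as a separate step. Neither point is a mathematical gap, only bookkeeping; the approach is the correct and standard one.
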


Consider a smooth manifold $X$ and an upper semicontinuous set-valued mapping $x\mapsto F(x)$ where for $x\in X$ the image $F(x)$ is a convex cone contained in the tangent space to $X$ at point $x$, i.e. $F(x)\subset T_xX$. 
Let $PX$ denote the saturation of the set of all solutions to the differential inclusion $\dot x \in F(x)$.  Then the pair $(X, PX)$ is a d-space.

 %We know from \cite{Aubin}
%that when $F$ is a Marchaud map, then the inclusion (\ref{diffincl}) has a solution
%such that 
%$x(t_0)=x_0$ (for all $x_0$) and 
%for a sufficiently small time interval $[t_0,t_0+\varepsilon)$,
%$\varepsilon > 0$. 
%Global existence, for all $t\in \R$ can be shown provided $F$ does not allow
%``blow-up'' ($\|x(t)\|\rightarrow\infty$ as $t\rightarrow t^*$
%for a finite $t^*$).
%For an open subset 
%Denote by $S(F,X)$ the set of pairs $(p,t)$ where $p: [0, t]\to \R^n$ is a solution to (\ref{diffincl}).
%%
%%and $t \geq 0$ 
%%of absolutely continuous solutions of Equation (\ref{diffincl}) over
%%compact and connected intervals of time (hence of the form $[0,t]$), together
%%with the maximal time on which it is defined, $t$. 
%Define the map 
%$$s : S(F,X)\rightarrow X\times X, \quad \mbox{by} \quad s(p,t)=(p(0),p(t)).$$ 
%Then there is a (Moore-like) concatenation operation on $S(F, X)$ 
%
%, for $(p,u)$ and $(q,v)$ in 
%$S(F,X)$, such that $s(p,u)_1=s(q,v)_0$, defined as $r=p*q$ with 
%$r(t)=p(t)$ for $t \in [0,u]$, and $r(t)=q(t-u)$ for $t\in [u,u+v]$. 
%Of course, $(r,u+v) \in S(F,X)$. 
%
%For $(p,u) \in S(F,X)$, we write $\tilde{p}$ for the map from $I$ to $X$ defined by
%$\tilde{p}(t)=p(ut)$. Concatenation carries over to maps of the form
%$\tilde{p}$, $p \in D$. 
%Define now $dX$ to be the saturation of $\pi_1(S(F,X))$, i.e. $\{\tilde{p}\circ \phi \ | \ (p,u)\in S(F,X) \mbox{ ,
%$\phi: I \rightarrow I$ non-decreasing}\}$. 
%Then 
%$(X,dX)$ as defined above is a d-space.
%%This is the natural setting for robot motion planning we have in mind in this article~: 

%HOP
\begin{figure}[h]
\begin{center}
\begin{tikzpicture}[auto,scale = 1]
    \draw (0,0) rectangle (2.5,2.5);
    \draw [fill = gray!50,draw = gray!50] (0.5,0.5) rectangle (1,1);
    \draw [fill = gray!50,draw = gray!50] (0.5,1.5) rectangle (1,2);
    \draw [fill = gray!50,draw = gray!50] (1.5,0.5) rectangle (2,1);
    \draw [fill = gray!50,draw = gray!50] (1.5,1.5) rectangle (2,2);
    \draw [dotted] (1,0) to (1,2.5);
    \draw [dotted] (0.5,0) to (0.5,2.5);
    \draw [dotted] (1.5,0) to (1.5,2.5);
    \draw [dotted] (2,0) to (2,2.5);
    \draw [dotted] (0,1) to (2.5,1);
    \draw [dotted] (0,0.5) to (2.5,0.5);
    \draw [dotted] (0,1.5) to (2.5,1.5);
    \draw [dotted] (0,2) to (2.5,2);
    \node (0) at (-0.2,-0.2) {\scriptsize{\textbf{0}}};
    \node (1) at (2.7,2.7) {\scriptsize{\textbf{1}}};
    \draw (0.5,-0.1) to (0.5,0.1);
    \node (P1a) at (0.5,-0.3) {\scriptsize{$Pa$}};
    \draw (1,-0.1) to (1,0.1);
    \node (V1a) at (1,-0.3) {\scriptsize{$Va$}};
    \draw (1.5,-0.1) to (1.5,0.1);
    \node (P1ap) at (1.5,-0.3) {\scriptsize{$Pa$}};
    \draw (2,-0.1) to (2,0.1);
    \node (V1ap) at (2,-0.3) {\scriptsize{$Va$}};
    \draw (-0.1,0.5) to (0.1,0.5);
    \node (P2a) at (-0.4,0.5) {\scriptsize{$Pa$}};
    \draw (-0.1,1) to (0.1,1);
    \node (V2a) at (-0.4,1) {\scriptsize{$Va$}};
    \draw (-0.1,1.5) to (0.1,1.5);
    \node (P2ap) at (-0.4,1.5) {\scriptsize{$Pa$}};
    \draw (-0.1,2) to (0.1,2);
    \node (V2ap) at (-0.4,2) {\scriptsize{$Va$}};
    \draw [thick] (0,0) .. controls (0.2,1.25)  .. (1.25,1.25);
    \draw [thick] (0,0) .. controls (1.25,0.2)  .. (1.25,1.25);
    \draw [thick] (2.5,2.5) .. controls (1.25,2.3)  .. (1.25,1.25);
    \draw [thick] (2.5,2.5) .. controls (2.3,1.25)  .. (1.25,1.25);
    \draw [thick] (0,0) .. controls (0.1,2.5)  .. (2.5,2.5);
    \draw [thick] (0,0) .. controls (2.5,0.1)  .. (2.5,2.5);
    \node (b) at (0,-1) {};
  \end{tikzpicture}
\end{center}
\caption{The semantics of $Pa.Va.Pb.Vb | Pa.Va.Pb.Vb$.}
\label{sem1}
\end{figure}
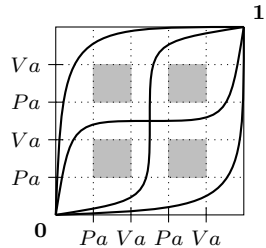
\paragraph{d-spaces in concurrency and distributed systems theory.}
The semantics of concurrent and distributed systems can be given in terms
of d-spaces, more specifically in terms of geometric realizations \cite{bookdag} of certain
pre-cubical sets. As an example, consider the following
concurrent program, made of two processes $T_1$, $T_2$, 
and two binary semaphores $a$, $b$, i.e. 
resources, that can only be accessed locked by one of
the two processes \cite{bookdag} at a time~: $T_1=Pa.Va.Pb.Vb$, $T_2=Pa.Va.Pb.Vb$, in 
the notations. This means that process $T_1$ is locking
$a$ ($Pa$), then relinquishing the lock on $a$ ($Va$), then locking 
$b$ ($Pb$), and finally relinquishing the lock of $b$ ($Vb$). Process
$T_2$ does the same sequence of actions. The semantics of this concurrent
program is depicted in Figure \ref{sem1}~: it is a partially ordered
space $X$, i.e. a topological space with a global order $\leq$, closed in 
$X \times X$. Its d-space structure is given by choosing dipaths to be
paths $p \ : \ I \rightarrow X$ such that $p$ is non-decreasing. 
A number of such dipaths are depicted in Figure \ref{sem1}.

%\paragraph{Relationship to term rewriting systems}

\paragraph{The d-paths map.}
In what follows, we will be particularly concerned with the following map~: 
\begin{definition}
Let $(X, PX)$ be a d-space. Define the {\em d-paths map}
$$\chi : PX \rightarrow X \times X$$ by  
$\chi(p)=(p(0),p(1))$ where $p \in PX$.
\end{definition}

This map is analogous to the classical path-space fibration (\ref{1}); %$X^I \to X\times X$
the essential distinction is that in the directed setting $\chi$, as defined above, is not necessary a fibration.

Since $PX$  contains only directed paths, the image of $\chi$ is a subset of $X \times X$, 
denoted $$\Gamma_X=\{ (x,y)\in X\times X \ | \ \exists p \in PX, \ p(0)=x, \ p(1)=y \ \}.$$
In the classical case, one do not need to force the restriction to the image
of the path space fibration since the notions of contractibility and path-connectedness
are simple enough to be defined separately. In the directed setting, d-contractibility,
and \lq\lq d-connectedness\rq\rq are not simple notions and will be defined 
here through the study of the d-path space map. 

%In order to study this map, in particular when looking at conditions under which there exists ``nice'' sections to it, we will need a few concepts
%from directed topology, that we introduce in Section~\ref{useful}. 

\paragraph{Notations:}
For $a, b\in X$, the symbol $PX(a,b)$ 
will denote the subspace of $PX$ 
consisting of all d-paths from the point $a \in X$
to the point $b \in X$. We denote by *  the concatenation map 
$$PX(a,b)\times PX(b,c)\to PX(a,c).$$ %(resp. on trace spaces) 
Note that $PX(a,b)$ is non-empty
if and only if $(a,b) \in \Gamma_X$. 
%We generally write $PX(x)$ instead of $PX(a,b)$ for $x =(a,b) \in \Gamma_X$. 

Any d-map $f :  X \to Y$ induces continuous maps $\Gamma f: \Gamma_X \rightarrow \Gamma_Y$
%(each of these spaces, we recall, is endowed with the product topology), 
and 
$Pf \ : \ PX \rightarrow PY$, such that the diagram 
$$
\begin{array}{ccc}
PX &\stackrel{Pf}\to & PY\\
\downarrow \chi_X && \downarrow\chi_Y\\
\Gamma_X& \stackrel{\Gamma f}\to & \Gamma_Y
\end{array}
$$
commutes. 
%where for every pair $(a, b)\in \Gamma_X$ the map $Pf$ sends $PX(a, b)$ to $PY(f(a), f(b))$. 
%
%with, for every $x \in \Gamma_X$, 
%
%$Pf_x \ : \ PX(x) \rightarrow PY(x))$ 
%with $Pf_x(p) = f\circ p$. 
%
%Moreover, we observe that the map $Pf$ is graded by $Pf_x$. 

\section{Directed topological complexity}

%\subsection{}%Definition} 
Let $(X, PX)$ be a d-space such that $X$ is an Euclidean Neighbourhood Retract (ENR). 
\begin{definition}\label{def1}
The directed topological complexity $\diTC{X, PX}$ 
of a d-space $(X,PX)$ %(which is also an Euclidean Neighborhood retract, or ENR)
is the minimum number $n$ (or $\infty$
if no such $n$ exists) such
that there exists a map $s : \Gamma_X \rightarrow PX$ (not necessarily continuous) and 
$\Gamma_X$ can be partitioned into $n$ ENRs
%
%\footnote{This could be
%replaced as in the classical case, by asking for a covering by open sets, or by 
%a covering by closed sets.}
$$\Gamma_X = F_1\cup F_2\cup \dots\cup F_n, \quad F_i\cap F_j =\emptyset,\quad i\not= j,$$
such that %there exists a map $s : \Gamma_X \rightarrow PX$ (not necessarily continuous) with~: 
\begin{itemize}
\item $\chi \circ s = Id$, i.e. $s$ is a (non-necessarily continuous) section of $\chi$;
\item $s|_{F_i} : F_i \rightarrow PX$ is continuous. 
\end{itemize}
A collection of such ENRs, $F_1,\ldots,F_n$, with $n$ equal to the directed topological complexity
of $X$ is called a patchwork. 
\end{definition}
%\begin{remark}
%\dbend. 
%\newline

\paragraph{Example in control theory.}
As in \cite{farber}, a motion planner, for the dynamics described by the 
differential inclusion (\ref{diffincl}) is a section of the d-paths map  produced by
the differential inclusion. A section $s: \Gamma_X \to PX$ associates to any pair of points $(x, y)\in \Gamma_X$ an \lq\lq admissible\rq\rq\
path $s(x, y)=\gamma\in PX$ with $\gamma(0)=x$ and $\gamma(1)=y$. 

\paragraph{Example in concurrency and distributed systems theory.}
Examine again Figure \ref{sem1}; a section of $\chi$ is just a scheduler
for the actions of the processes $T_1$ and $T_2$. 

\vskip .2cm

In the theory of usual (i.e. undirected)  topological complexity \cite{farber1}, \cite{farber}, there are several other equivalent definitions, for example 
the topological complexity $\tc(X)$ is also the minimal cardinality of the covering of $X\times X$
by open (resp. closed) sets admitting continuous sections; moreover, the book \cite{farber} contains four different definitions of $\tc(X)$ 
leading to the equivalent notions of $\tc(X)$. 
In the directed case, however, the definitions with open or closed covers lead to notions which can be distinct between themselves as well as distinct from the notion with the ENR partitions given above. 

%it is different to consider partitions into ENRs and coverings by 
%open sets. In the directed case, coverings by open sets do not give rise to an interesting notion. This is due to the fact that in general $\chi$ is not a fibration.  

\begin{example}\label{ex1}{\rm 
Consider the interval $I=[0, 1]$ with the d-structure given by the set of all non-decreasing paths, i.e. $p: [0, 1]\to [0,1]$ such that 
$p(t)\le p(t')$ for any $t\le t'$.  The space $\Gamma_I$ is $\{(x, y); x\le y\}$ and the map $\chi: PI\to \Gamma_I$ admits a continuous section $$s(x, y)(t)=(1-t)x+ty$$ where $t\in [0,1].$ Hence $\diTC I=1$.

Note that in this example the space $\Gamma_I$ is contractible and the map 
$\chi$ is a fibration with a contractible fibre. 
}
\end{example}

\begin{example}\label{ex2}
\label{directedcircle}{\rm 
Let us consider the directed circle $\diS1$
%$\overrightarrow{\mathbb{O}^1}$ 
shown on the figure below:
\begin{center}
\begin{tikzpicture}
 % définition des styles
\tikzstyle{cercle}=[circle,draw,fill=yellow!50,text=blue]
 \tikzstyle{fleche}=[->,>=latex,very thick]
% les nœuds
 \node[cercle] (b) at (0,0) {b};
 \node[cercle] (e) at (2,0) {e};
 % les flèches
  \draw[fleche] (b)to[bend left](e);
  \draw[fleche] (b)to[bend right](e);
  %  \draw[estun] (L)--(Q.south east);  \draw[estun] (L)--(P);
%  \draw[estun] (C)to[bend right](Q.east); \draw[estun] (C)to[bend left](P);
%  \draw[estun] (C)--(L.south east);  \draw[estun] (C)to[bend left](R);
 % la légende
%  \draw[estun] (-4.5,2.5)--(-3,2.5)node[midway,above]{est un};
\end{tikzpicture}
\end{center}
It is a directed graph homeomorphic to the circle $S^1$ which is the union of two directed intervals $I_+\cup I_-$; the d-paths of 
$\diS1$ %\overrightarrow{\mathbb{O}^1}$ 
are the d-paths lying in one of the intervals $I_\pm$. We see that 
%$P(\overrightarrow{\mathbb{O}^1})
$P(\diS1)
= P(I_+)\cup P(I_-)$ and 
$P(I_+)\cap P(I_-)$ is a 2-point set containing the two constant paths $p_b(t)\equiv b$ and $p_e(t)\equiv e$. Similarly, one has $\Gamma_{\diS1}=\Gamma_{I_+}\cup \Gamma_{I_-}$ and the intersection 
$\Gamma_{I_+}\cap \Gamma_{I_-}$ is a 3 point set $\{(b,b), (b, e), (e, e)\}$. Since each of the sets $\Gamma_{I_{\pm}}$ is contractible we obtain that $\Gamma_{\diS1}$ is homotopy equivalent to the wedge $S^1\vee S^1$. 

Next we observe that the map $\chi: P\diS1 \to \Gamma_{\diS1}$ {\it admits no continuous section over any neighbourhood $U$ of the point 
$(b, e)\in \Gamma_{\diS1}$}. To show this one notes that the preimage $\chi^{-1}(b, e)$ has two connected components, one of which consists of the d-paths lying in $I_+$ and the other of the d-paths lying in $I_-$. Any open set $U\subset \Gamma_{\diS1}$ containing $(b, e)$ 
must contain a pair $(x^+, y^+)\in \Gamma_{I_+}$ and a pair $(x^-, y^-)\in \Gamma_{I_-}$, arbitrarily close to $(b, e)$. Moreover, we may 
find two sequences $(x^\pm_n, y^\pm_n)\in \Gamma_{I_\pm}$ of points converging to $(b, a)$ and the limits of any section over $U$ 
along these sequences would land in different connected component of $\chi^{-1}(b, e)$. 
Hence, we obtain $\diTC {\diS1} \ge 2$. On the other hand, we may represent $\Gamma_{\diS1}$ as the
union $$\Gamma_{\diS1}=F_1\cup F_2$$ where $F_1=\Gamma_{I_+}$ and $F_2=\Gamma_{I_-}-\{(b, e), (b,b), (e,e)\}$ and using the previous example we see that over each of the sets $F_1, F_2$ there exists a continuous section of $\chi$. Hence we obtain 
\begin{eqnarray}
\diTC {\diS1} =2. 
\end{eqnarray}
}
\end{example}

\section{Regular d-spaces}

\begin{definition}\label{def4}
A d-space $(X, PX)$ will be called regular if one can find a partition $$\Gamma_X=F_1\cup F_2\cup \dots\cup F_n, \quad n=\diTC X$$ 
 onto ENRs such that the map $\chi$ admits a continuous section over each $F_i$ and, additionally, the sets $\mathop{\cup}\limits_{i=1}^r F_i$ are closed for any $r=1, \dots, n$. 
\end{definition}

Note the following property of the sets which appear in Definition \ref{def4}:
\begin{eqnarray}\label{property}
\overline F_i \cap F_{i'} = \emptyset \quad \mbox{for}\quad i<i'.
\end{eqnarray}

In the \lq\lq undirected\rq\rq\, theory of  $\tc(X)$ this property is automatically satisfied, see Proposition 4.12 of \cite{farber}. 

All examples of d-spaces which appear in this paper are regular. At present we know of no examples of d-spaces which are not regular; we plan to address this question in more detail elsewhere. 

\begin{example} \label{example3} {\rm The directed circle $\diS1$ is regular as follows from the construction of Example \ref{directedcircle}.
}
\end{example}

The Cartesian product of d-spaces $(X, PX)$ and $(Y, PY)$ has a natural d-space structure. Any path 
$\gamma: [0,1]\to X\times Y$ has the form $\gamma(t)=(\gamma_X(t), \gamma_Y(t))$ and we declare $\gamma$ to be directed if 
its both coordinates are directed, i.e. $\gamma_X\in PX$ and $\gamma_Y\in PY$. Note that $\Gamma_{X\times Y}=\Gamma_X\times\Gamma_Y$.

\begin{proposition}\label{prop1}
If the d-spaces $(X_i, PX_i)$ are regular, where $i=1, 2,\dots, k$, then 
\begin{eqnarray}
\diTC {X_1\times X_2\times \dots\times X_k} -1 \le \sum_{i=1}^k \left[\diTC {X_i} -1\right].
\end{eqnarray}
\end{proposition}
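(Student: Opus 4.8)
The plan is to mimic the classical product inequality for topological complexity (Theorem 11 in \cite{farber}, or Proposition~4.24 in \cite{farber}), adapting it to the ENR-partition setting with the extra bookkeeping needed to keep track of continuity of sections. By induction on $k$ it suffices to treat $k=2$: write $X=X_1$, $Y=X_2$, and suppose $\diTC{X}=m$, $\diTC{Y}=n$, realized respectively by partitions $\Gamma_X=\bigcup_{i=1}^m F_i$ with continuous sections $s_i\colon F_i\to PX$ and $\Gamma_Y=\bigcup_{j=1}^n G_j$ with continuous sections $t_j\colon G_j\to PY$. Using $\Gamma_{X\times Y}=\Gamma_X\times\Gamma_Y$ (noted just before the proposition), the products $F_i\times G_j$ form an ENR partition of $\Gamma_{X\times Y}$ into $mn$ pieces, each carrying the continuous section $(s_i,t_j)$ obtained coordinatewise; this already gives the crude bound $\diTC{X\times Y}\le mn$. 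The point of the sharper inequality is to regroup these $mn$ pieces into only $m+n-1$ ENRs, each still admitting a continuous section.

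\textbf{The regrouping.}
The key is to exploit regularity. Reorder the $F_i$ so that $A_r=\bigcup_{i\le r}F_i$ is closed for every $r$, and likewise the $G_j$ so that $B_s=\bigcup_{j\le s}G_j$ is closed for every $s$; this is exactly what Definition~\ref{def4} provides. For $1\le \ell\le m+n-1$ set
$$
H_\ell \ =\ \bigcup_{i+j=\ell+1} F_i\times G_j,
$$
the \lq\lq diagonal\rq\rq\ band of the grid. Each $H_\ell$ is a finite disjoint union of ENRs $F_i\times G_j$, hence an ENR, and the $H_\ell$ partition $\Gamma_{X\times Y}$. It remains to produce a single continuous section of $\chi_{X\times Y}$ on each $H_\ell$. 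On the piece $F_i\times G_j$ we must use $(s_i,t_j)$; the issue is whether these glue continuously across the (finitely many) pieces making up $H_\ell$. Here one uses the closed-prefix property: on $F_i\times G_j\subset H_\ell$ one has $i+j=\ell+1$, and the subsets $F_i$ are \lq\lq separated\rq\rq\ within $H_\ell$ in the sense of property (\ref{property}), $\overline F_{i}\cap F_{i'}=\emptyset$ for $i<i'$ — so the different $F_i\times G_j$ occurring in a fixed $H_\ell$ have disjoint closures inside $H_\ell$. Concretely, $F_i\times G_j = (A_i\setminus A_{i-1})\times(B_j\setminus B_{j-1})$, and one checks that $H_\ell$ decomposes as a finite union of pairwise-disjoint sets each of which is \emph{open and closed in $H_\ell$} (the trace of $A_i\times B_j$ on $H_\ell$, intersected with complements of lower strata, is clopen in $H_\ell$ precisely because along the anti-diagonal larger $i$ forces smaller $j$). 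A function defined piecewise on a finite clopen partition by continuous pieces is continuous, so the map $\sigma_\ell\colon H_\ell\to P(X\times Y)$ equal to $(s_i,t_j)$ on $F_i\times G_j$ is continuous and is a section of $\chi_{X\times Y}$.

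\textbf{Main obstacle.}
The genuinely delicate step is the clopen claim: verifying that within a fixed anti-diagonal band $H_\ell$ the constituent rectangles $F_i\times G_j$ are simultaneously open and closed in the subspace topology of $H_\ell$. This is where regularity is indispensable and where the analogue of Proposition~4.12 of \cite{farber} (property (\ref{property})) enters; without the closed-prefix condition the rectangles along a band need not be separated and the piecewise-defined section could fail to be continuous. Once this topological point is settled, the count is immediate: we have partitioned $\Gamma_{X\times Y}$ into $m+n-1$ ENRs each admitting a continuous section of $\chi$, whence $\diTC{X\times Y}\le m+n-1$, i.e. $\diTC{X\times Y}-1\le (\diTC X-1)+(\diTC Y-1)$. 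The general case $k>2$ follows by an obvious induction, noting that one should check the product of regular d-spaces constructed in the induction step inherits a partition with the closed-prefix property — which the band construction above in fact supplies, so the inductive hypothesis of regularity is maintained.
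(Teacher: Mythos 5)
Your proposal is correct and follows essentially the same route as the paper: both regroup the product rectangles $F_i\times G_j$ into anti-diagonal bands indexed by $i+j$ and use the closed-prefix property (equivalently, property (\ref{property})) to see that the rectangles within a band are open (hence clopen) in the band, so the coordinatewise sections glue. The only difference is that the paper treats all $k$ factors simultaneously, which lets it avoid the (minor, and correctly flagged by you) bookkeeping of checking that the band partition of a product again has closed prefix unions for the inductive step.
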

\begin{proof}
Denote $\diTC {X_i} = n_i+1$ and let 
$$\Gamma_{X_i} = F_0^i\cup F_1^i\cup \dots\cup F_{n_i}^i$$
be a partition as in the Definition \ref{def4}, i.e. each set $F_j^i$ is an ENR, the map $\chi$ admits a continuous section over $F_j^i$ 
and each union $F^i_0\cup \dots \cup F^i_r$ is closed, $r=0, \dots, n_i$. Denoting $X=\prod\limits_{i=1}^k X_i$ and 
identifying the space $\Gamma_{X}$ with the product $\prod_{i=1}^k \Gamma_{X_i}$, we see that the sets 
$$F_{j_1}^1 \times F_{j_2}^2 \times \dots\times F_{j_k}^k$$
form a ENR partition of $\Gamma_{X}$, where each index $j_s$ runs through $0, 1, \dots, n_s$. The continuous sections 
$F_{j_s}^s\to PX_s$,
where $s=1, \dots, k$, 
obviously produce  continuous sections
$$\sigma_{j_1 j_2\dots j_s}: F_{j_1}^1 \times F_{j_2}^2 \times \dots\times F_{j_k}^k\to PX.$$
Consider the sets
\begin{eqnarray}\label{union}
\bigcup_{j_1+\dots+j_k=j} F_{j_1}^1 \times F_{j_2}^2 \times \dots\times F_{j_k}^k =G_j\subset \Gamma_X,
\end{eqnarray}
with $j=0, 1, \dots, N$, where $N=n_1+n_2+\dots+n_k.$ 
We observe that the terms of the union (\ref{union}) are pairwise disjoint and open in $G_j$ (due to (\ref{property})) and hence the collection
of continuous maps  $\sigma_{j_1 j_2\dots j_s}$ defines a continuous section $G_j\to PX$. This proves that $\diTC X \le N+1$ as claimed. 

\end{proof}

\begin{corollary} The directed torus $(\diS1)^n$ satisfies $\diTC {(\diS1)^n} \le n+1$. 
\end{corollary}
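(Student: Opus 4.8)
The plan is to derive this as an immediate consequence of Proposition \ref{prop1} applied to the special case $X_1 = X_2 = \dots = X_n = \diS1$. First I would recall from Example \ref{example3} that the directed circle $\diS1$ is regular, so that the hypotheses of Proposition \ref{prop1} are satisfied for each factor. Then I would invoke Example \ref{directedcircle}, which establishes $\diTC{\diS1} = 2$, so that in the notation of the proposition each $\diTC{X_i} - 1 = 1$.

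Substituting into the inequality of Proposition \ref{prop1} with $k = n$ gives
\begin{eqnarray}
\diTC{(\diS1)^n} - 1 \le \sum_{i=1}^n \left[\diTC{\diS1} - 1\right] = \sum_{i=1}^n 1 = n,
\end{eqnarray}
and rearranging yields $\diTC{(\diS1)^n} \le n+1$, which is exactly the claim. The only mild point to note is that the d-structure on the iterated product $(\diS1)^n$ used here is precisely the one described just before Proposition \ref{prop1} (coordinatewise directed paths), so $\Gamma_{(\diS1)^n}$ is identified with $(\Gamma_{\diS1})^n$; this matches the convention of the proposition and needs no further argument.

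Since this is a direct specialization, there is essentially no obstacle: the entire content lies in Proposition \ref{prop1} and in the prior computation $\diTC{\diS1} = 2$. If anything, the only thing worth a sentence is observing that one could hope for equality $\diTC{(\diS1)^n} = n+1$ (by analogy with the undirected torus, where $\tc((S^1)^n) = n+1$), but proving the matching lower bound would require a cohomological or connectivity obstruction adapted to the directed setting and is not claimed here; the corollary asserts only the upper bound.
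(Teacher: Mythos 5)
Your proof is correct and is exactly the paper's argument: the paper's proof consists of the single sentence that the corollary follows from Proposition \ref{prop1} and Example \ref{example3}. You have simply spelled out the substitution $\diTC{\diS1}=2$ (from Example \ref{directedcircle}) into the product inequality, which is a faithful expansion of the same route.
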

\begin{proof}
This follows from Proposition \ref{prop1} and \ref{example3}. 
\end{proof}

%\section{Comparison between directed and classical topological complexity}
%
%\subsection{Strongly connected directed spaces}

\begin{definition}
We say that a d-space $X$ is strongly connected if $\Gamma_X =X\times X$. 
\end{definition}

In other words, in a strongly connected d-space $X$ 
for any pair $(x,y)$ in $X \times X$
there exists a directed path $\gamma\in PX$ with $\gamma(0)=x$, $\gamma(1)=y$.

\begin{proposition}
\label{SCD}
For any strongly connected d-space $X$ one has $\tc(X)\leq\diTC{X}$.
\end{proposition}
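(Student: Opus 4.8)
My plan is to compare the directed invariant with the ENR--partition form of the classical topological complexity. First I would recall from \cite{farber} (which works under the same ENR hypothesis on $X$ that is in force here) the equivalent description of $\tc(X)$ as the least integer $n$ such that $X\times X$ can be partitioned into $n$ pairwise disjoint ENRs $E_1,\dots,E_n$ over each of which the classical path-space fibration $\chi: X^I\to X\times X$, $\chi(p)=(p(0),p(1))$, admits a continuous section. This is one of the equivalent definitions alluded to after Definition \ref{def1}, and it is precisely the one whose shape matches Definition \ref{def1}.

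Next I would take a patchwork realising $\diTC X=n$: a partition $\Gamma_X=F_1\cup\dots\cup F_n$ into pairwise disjoint ENRs together with a section $s:\Gamma_X\to PX$ of the d-paths map $\chi:PX\to\Gamma_X$ such that each restriction $s|_{F_i}:F_i\to PX$ is continuous. Since $X$ is strongly connected we have $\Gamma_X=X\times X$, so $F_1,\dots,F_n$ is already an ENR partition of $X\times X$, i.e.\ of the base of the classical fibration.

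The remaining step is to upgrade the directed sections $s|_{F_i}$ to sections of the classical fibration. Here I would observe that $PX\subseteq X^I$, since every d-path is in particular a continuous path, and that the inclusion $\iota:PX\hookrightarrow X^I$ is continuous: both spaces carry the compact--open topology and $PX$ carries the subspace topology. Because the classical endpoint map $X^I\to X\times X$ restricts along $\iota$ to the d-paths map $\chi$, each composite $\iota\circ(s|_{F_i}):F_i\to X^I$ is a continuous section of the classical path-space fibration over $F_i$. Hence the ENR partition $F_1,\dots,F_n$ of $X\times X$ witnesses $\tc(X)\le n=\diTC X$.

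The only step requiring genuine care, and the one I would flag as the main obstacle, is the first: one must invoke the ENR--partition formulation of $\tc(X)$ and be sure it agrees with the customary open-cover definition. This equivalence holds exactly because $X$ is assumed to be an ENR, and is established in \cite{farber}; once it is granted, the rest of the argument is purely formal (strong connectedness supplies $\Gamma_X=X\times X$, and the forgetful inclusion $PX\hookrightarrow X^I$ supplies the classical sections).
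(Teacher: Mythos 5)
Your argument is correct and is essentially the paper's own proof: the paper likewise takes the patchwork for $\diTC X$, uses strong connectedness to identify $\Gamma_X$ with $X\times X$, and observes that the same partition serves for the classical path-space fibration (via the ENR-partition characterisation of $\tc$ from \cite{farber}). You simply make explicit the two details the paper leaves implicit, namely the continuity of the inclusion $PX\hookrightarrow X^I$ and the appeal to the ENR-partition form of $\tc(X)$.
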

 
\begin{proof} Let $X$ be strongly connected and let $\Gamma_X=X\times X=F_1\cup F_2\cup \dots\cup F_n$ be a partition into the 
ENRs as in Definition \ref{def1} with $n=\diTC X$. Then the same partition can serve for the path space fibration $X^I\to X\times X$ which implies our result. 
%
%Strong connectedness implies that $\Gamma_X=X \times X$. 
%Now, given a set of patchworks for $X$, this provides a covering by ENRs with 
%a local section into the (di)path space.
\end{proof}

\begin{example}\label{ex4}
\label{diS11}{\rm 
Consider {\it the directed loop} $\diOo$ which can be defined as the unit circle $$S^1=\{z\in \C; |z|=1\}\subset \C$$ with the d-structure described below. Any continuous path 
$\gamma: [0,1]\to S^1$ can be presented in the form $\gamma(t)=\exp(i\phi(t))$ where the function $\phi: [0,1]\to \R$ is defined uniquely up to adding an integer multiple of $\pm 2\pi$. We declare a path $\gamma$ to be positive if the function $\phi(t)$ is nondecreasing. 
It is obvious that the obtained d-space is strongly connected. Hence, using Proposition \ref{SCD}, we obtain $\diTC{\diOo} \ge \tc(S^1)=2$, . On the other hand, we can partition $S^1\times S^1=F_1\cup F_2$ where 
$F_1=\{(z_1, z_2)\in S^1\times S^1; z_1=z_2\}$ and 
$F_2=\{(z_1, z_2)\in S^1\times S^1; z_1\not=z_2\}$. It is clear that we obtain a section of $\chi$ over $F_1$ by assigning the constant path at $z$ for any pair $(z, z)\in F_1$. 
A continuous section of $\chi$ over $F_2$ can be defined as follows by moving $z_1$ along the circle in the positive direction towards $z_2$ with constant velocity. 
%
%
%Given two distinct points $z_1, z_2\in S^1$ one may write $z_1=\exp(i\phi_1)$ and $z_2=\exp(i\phi_2)$ where $\phi_2-2\pi <\phi_1<\phi_2$. Then 
%$\gamma(t) =\exp(i[(1-t)\phi_1+t\phi_2])$ is a positive path from $z_1$ to $z_2$ and the map $(z_1, z_2)\mapsto \gamma$ is continuous.
We conclude that 
\begin{eqnarray}\label{eq2}
\diTC {\diOo} =2.
\end{eqnarray}
Besides, we see that the directed loop $\diOo$ is regular. 

\begin{corollary} 
One has, $$\diTC {(\diOo)^n}= n+1,$$ i.e. the directed topological complexity of the directed $n$-dimensional torus $(\diOo)^n$ equals $n+1$. 
\end{corollary}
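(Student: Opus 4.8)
The plan is to prove the corollary as a combination of an upper bound coming from Proposition~\ref{prop1} and a lower bound coming from Proposition~\ref{SCD} together with the known value $\tc(T^n)=n+1$ for the ordinary $n$-torus $T^n=(S^1)^n$.

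For the upper bound, I would first record that $\diOo$ is regular with $\diTC{\diOo}=2$, which is exactly the content of Example~\ref{diS11} (equation~(\ref{eq2})). Since the product of regular d-spaces is handled by Proposition~\ref{prop1}, and since finite products of regular d-spaces are again regular (the explicit closed filtration $G_0\subset G_0\cup G_1\subset\cdots$ built in the proof of Proposition~\ref{prop1} exhibits this), applying that proposition with $X_i=\diOo$ and $k=n$ gives
\begin{equation*}
\diTC{(\diOo)^n}-1\le \sum_{i=1}^n\left(\diTC{\diOo}-1\right)=\sum_{i=1}^n 1=n,
\end{equation*}
hence $\diTC{(\diOo)^n}\le n+1$.

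For the lower bound, I would observe that a product of strongly connected d-spaces is strongly connected: indeed $\Gamma_{X\times Y}=\Gamma_X\times\Gamma_Y$ was noted just before Proposition~\ref{prop1}, so if $\Gamma_{X_i}=X_i\times X_i$ for all $i$ then $\Gamma_{\prod X_i}=\prod(X_i\times X_i)=(\prod X_i)\times(\prod X_i)$. Since $\diOo$ is strongly connected, so is $(\diOo)^n$, and its underlying space is the torus $T^n=(S^1)^n$. Proposition~\ref{SCD} then yields $\diTC{(\diOo)^n}\ge \tc(T^n)$, and invoking the classical computation $\tc(T^n)=n+1$ (Farber, \cite{farber}) gives $\diTC{(\diOo)^n}\ge n+1$.

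Combining the two inequalities gives $\diTC{(\diOo)^n}=n+1$. The only point that requires a little care — the step I would flag as the main thing to get right rather than a genuine obstacle — is the remark that finite products of regular d-spaces are regular, so that Proposition~\ref{prop1} can be iterated (or applied directly with $k=n$); this is immediate from inspecting the closed sets $\bigcup_{m\le r} G_m$ constructed in its proof, but it should be stated explicitly since Proposition~\ref{prop1} as phrased only asserts the inequality on $\diTC$, not the regularity of the product. Everything else is a direct citation of earlier results in the paper together with the standard value of $\tc$ for tori.
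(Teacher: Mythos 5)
Your proposal is correct and follows essentially the same route as the paper: the upper bound from Example~\ref{diS11} together with Proposition~\ref{prop1} applied directly with $k=n$ copies of $\diOo$, and the lower bound from strong connectedness, Proposition~\ref{SCD}, and $\tc((S^1)^n)=n+1$. Your worry about regularity of products is moot since Proposition~\ref{prop1} is stated for $k$ factors at once and need not be iterated, as you yourself note.
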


\begin{proof} 
First we apply (\ref{eq2}) and Proposition \ref{prop1} to obtain the inequality $\diTC {(\diOo)^n}\le n+1$. Next we observe that $(\diOo)^n$ is strongly connected and, by Proposition \ref{SCD}, $\diTC {(\diOo)^n} \ge \tc((S^1)^n)=n+1$. 
\end{proof}
%
%
%
%
%\vskip 2cm
%\begin{center}
%\epsfig{file=SS1.png,clip=,width=2.5cm}
%\end{center}
%It is a directed graph which is the union of two oriented arcs. 
%It is well-known \cite{farber} that its topological complexity is $2$, therefore,
%by Proposition \ref{SCD}, $\diTC{\diS1}\leq 2$.
}
\end{example}
\section{Directed graphs}

Let $G$ be a directed connected graph, i.e. each edge of $G$ has a specified orientation. One naturally defines a d-structure on $G$ 
as follows. Each edge of $G$ can be identified either with the directed interval $I$ (see Example \ref{ex1}) or with the loop $\diOo$ 
(see Example \ref{ex4}) and \lq\lq small directed paths\rq\rq, i.e. the paths lying on an edge, are the directed paths specified in Example \ref{ex1} and Example \ref{ex4}. In general, the directed paths of $G$ are concatenations of small directed paths. 

For a directed graph $G$ the set $\Gamma_G$ has the following property: if a pair $(x, y)$ belongs to $\Gamma_G$ where $x $ is an internal point of an edge $e$ and $y\notin e$ then all pairs $(x', y)$ also belong to $\Gamma_G$ where $x'\in {\rm Int} (e)$.  

\begin{proposition}\label{generalgraphs}
$\diTC{G}\leq 3$.
\end{proposition}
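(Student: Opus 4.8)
The plan is to produce an explicit patchwork for $\Gamma_G$ with at most three pieces, exploiting the structural property of $\Gamma_G$ stated just before the proposition. The guiding idea is that a directed graph is, away from its vertices, a disjoint union of open edges (each an open interval or an open arc), and the difficulties in constructing a continuous section of $\chi$ are concentrated exactly at the vertices and at points where a d-path is forced to pass through a vertex. So I would stratify $\Gamma_G$ according to how the source and target points sit relative to the vertex set $V(G)$ and the open edges, and check that on each stratum one can choose d-paths continuously.

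\medskip

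\textbf{Step 1: the "generic" stratum.} Let $F_3$ consist of those pairs $(x,y)\in\Gamma_G$ such that $x$ and $y$ lie in the interior of edges (possibly the same edge) and, moreover, some d-path from $x$ to $y$ can be chosen avoiding... no, one cannot always avoid vertices. Instead I would organize $F_3$ as the set of $(x,y)$ where $x$ is interior to an edge $e$, $y$ is interior to an edge $e'$, and there is a directed path from $x$ to $y$; the property quoted before the proposition says that if $(x,y)\in\Gamma_G$ with $x\in\mathrm{Int}(e)$, $y\notin e$, then all $(x',y)$ with $x'\in\mathrm{Int}(e)$ are in $\Gamma_G$, and symmetrically in the second variable. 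This lets me, on a connected component of this stratum, pick a fixed "reference" directed edge-path (a combinatorial route through vertices) and define the section by: run monotonically from $x$ to the end of $e$, follow the fixed reference route of edges, then run from the start of $e'$ to $y$ — all at constant speed in each edge. Continuity in $(x,y)$ holds because only the first and last legs vary and they vary continuously. The stratum $F_3$ should be an ENR (it is an open-ish subset of a semialgebraic-type set built from products of open edges).

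\medskip

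\textbf{Step 2: the strata meeting vertices.} The remaining pairs $(x,y)$ have $x\in V(G)$ or $y\in V(G)$. I would split these into $F_2=\{(x,y)\in\Gamma_G : x\notin V(G),\ y\in V(G)\}\cup\{(x,y): x\in V(G),\ y\notin V(G)\}$ suitably refined, and $F_1=\{(x,y)\in\Gamma_G: x\in V(G)\ \text{and}\ y\in V(G)\}$, together with whatever boundary pieces are needed to make the $F_i$ genuinely disjoint and to make $F_3$ (or $F_3\cup F_2$) closed-complemented appropriately. On $F_1$, the endpoints range over the finite set $V(G)$, so $F_1$ is a finite set and any section is trivially continuous on it. On $F_2$, one endpoint is free in an open edge and the other is a fixed vertex; again choose a reference combinatorial route for each component and let the single free leg vary continuously. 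Some care is needed to assign the half-open "edge plus one endpoint vertex" cases consistently; I expect two pieces ($F_1$, $F_2$) to suffice here because the constant-path trick handles $(v,v)$ and short routes, and the property before the proposition guarantees compatibility of route-choices as the free endpoint moves.

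\medskip

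\textbf{Main obstacle.} The delicate point is not the existence of continuous local sections — that is easy on each stratum — but (a) checking that the three pieces can be chosen to be honest ENRs that partition $\Gamma_X$, and (b) making sure the reference routes glue: when the free endpoint $x$ crosses from one open edge toward a vertex, the chosen combinatorial route must vary so that the resulting d-path varies continuously, which is exactly where the stated property of $\Gamma_G$ (all $(x',y)$ lie in $\Gamma_G$ once one does) is used, but one must verify it gives a \emph{continuous} and not merely pointwise choice. I would handle this by fixing, once and for all, for each ordered pair of edges a minimal-length directed edge-route realizing the connection (using connectedness of $G$ and the orientation data), and arguing that within a connected component of a stratum this choice is locally constant, hence continuous reparametrization does the rest. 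Getting the bookkeeping of boundaries right so that exactly three pieces appear — rather than four — is the part I would expect to spend the most effort on.
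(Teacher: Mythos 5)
Your proposal is essentially the paper's own proof: the same three-part partition of $\Gamma_G$ (vertex--vertex pairs, vertex--edge-interior pairs, interior--interior pairs), with the section on each piece built from constant-velocity legs along the end edges concatenated with a fixed reference directed route between the relevant vertices. The ``gluing'' obstacle you single out is moot: as the free endpoint approaches a vertex the pair leaves $F_3$ and enters a different piece of the partition, and within $F_3$ each set $\mathrm{Int}(e)\times\mathrm{Int}(e')\cap\Gamma_G$ is open and closed, so a choice of route that is constant on each such piece is automatically continuous.
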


\begin{proof}
%Take as a partition in ENRs the following~: 
%\begin{itemize}
%\item $F_1=\{(x,y)\in \Gamma_G | x, y \mbox{ vertices }\}$
%\item $F_2=\{(x,e), (e,x) \in \Gamma_G | x \mbox{ vertex and } e \mbox{ within an edge}\}$
%\item $F_3=\{(e,f) \in \Gamma_G | e, f \mbox{ within edges}\}$
%\end{itemize}
Consider the following partition $\Gamma_G=F_1\cup F_2\cup F_3$ where %a patchwork is 
\begin{itemize}
\item $F_1$ is the set of pairs of vertices $(\alpha_i,\alpha_j)$ of $G$ which are in $\Gamma_G$;
\item $F_2$ is the set of pairs $(x,y) \in \Gamma_G$ made of a vertex, and the interior of
an arc;
%i.e. 
%$(\alpha_i,e_j)$ or $(e_j,\alpha_i)$ (according
%to whether the first one or the second one is included in $\Gamma_G$)
\item $F_3$ is the set of pairs  $(x,y)\in \Gamma_G$ with $x$ and $y$ lying in the interiors of arcs. 
%, i.e. $(e_i,e_j)$ in $\Gamma_G$. 
\end{itemize}
For each pair of vertices $(\alpha_i, \alpha_j)\in \Gamma_G$ fix a directed path $\gamma_{ij}$ from $\alpha_i$ to $\alpha_j$. 
This defines a section of $\chi$ over $F_1$. Note that all pairs $(\alpha_i, \alpha_i)$ belong to $\Gamma_G$ and the path 
$\gamma_{ii}$ can be chosen to be constant. 

%
%As the parts $E_i$ are made up of distinct connected components (each one
%of $(\alpha_i,\alpha_j)$, or $(\alpha_i,e_j)$, $(e_i,\alpha_j)$ or
%$(e_i,e_j)$), we only have to check that we can define a continuous section
%of $\chi$ on each of these separate subsets. 
%
%On $(\alpha_i,\alpha_j)$, this
%is obvious, we can pick any directed path between $\alpha_i$ and $\alpha_j$, 
%this will define a continuous section on $E_1$. 

Consider now an oriented edge $e$ and a vertex $\alpha_j$ such that $(x, \alpha_j)\in \Gamma_G$ for an internal point 
$x\in {\rm Int}(e)$. Let $\alpha_i$ be the end point of $e$ and let $\gamma_{x, \alpha_i}$ denote the constant velocity path along $e$ from $x$ to $\alpha_i$. 
 A continuous section of $\chi$ over ${\rm Int}(e)\times \alpha_j$ can be defined as $(x, \alpha_j)\mapsto \gamma_{x, \alpha_i}\star \gamma_{ij}$ where $\ast$ stands for concatenation. A continuous section over $\alpha_j \times {\rm Int}(e)$ can be defined similarly, and hence we have a continuous section of $\chi$ over $F_2$. 
 
Finally we describe a continuous section of $\chi$ over $F_3$. Consider two oriented edges $e$ and $e'$ where we shall first assume that 
$e\not= e'$. Let $\alpha$ denote the end point of $e$ and $\beta$ denote the initial point of $e'$. We define a section of $\chi$ by 
$$(x, y)\mapsto \gamma_{x, \alpha}\ast \gamma_{\alpha \beta}\ast \gamma_{\beta y}$$ for $x\in {\rm Int}(e)$ and $y\in {\rm Int}(e')$. 
 Here $\gamma_{x\alpha}$ denotes a constant velocity directed path along $e$ connecting $x$ to $\alpha$; the path 
 $\gamma_{\beta y}$ is defined similarly and $\gamma_{\alpha\beta}$ is a positive path from $\alpha$ to $\beta$. 
 
 Finally we consider the case when $e=e'$. For a pair $(x, y)\in \Gamma_G$ with $x, y\in {\rm Int}(e)$ we define the section by 
 $(x, y)\mapsto \gamma_{xy}$ where $\gamma_{xy}$ is a constant velocity path along $e$ from $x$ to $y$. 
 
 All the partial sections described above over various parts of $F_3$ obviously combine into a continuous section over $F_3$. 
 
% \vskip 2cm
%by defining its value at $(x, \alpha_j)$ as the path which follows $e$ until it arrives at $\alpha_i$ and then concatenated with $\gamma_{ij}$.
%
%
%
%$(\alpha_i,e_j)$
%in $\Gamma_G$. Pick any directed path from $\alpha_i$ to the start point $\alpha_j$ of $e_j$ and concatenate it with the unique directed path $p$ from $\alpha_j$ to 
%$x \in e_j$ normalized in such a way that the length on $e_j$ from 
%$\alpha_j$ to $p(t)$, for $t \in I$, is $t$ times the length on $e_j$ 
%from $\alpha_j$ to $x$. This defines a continuous section of $\chi$ on $E_2$. 
%
%Finally, consider $(e_i,e_j)$ included in $\Gamma_G$. Call $\beta_i$ the
%end vertex of $e_i$, and $\alpha_j$ the start vertex of $e_j$. Pick a directed
%path $q$ from $\beta_i$ to $\alpha_j$ in $G$. Now, for $x$ in the interior of $e_i$
%and $y$ in the interior of $e_j$, concatenate the directed path $p$ from 
%$x$ to $\beta_i$ such that the length on $e_i$ from $p(t)$ to $\beta_i$ is
%$1-t$ times the length on $e_i$ from $x$ to $\beta_i$, with $q$, and with
%$r$, the directed path from $\alpha_j$ to $y$ such that the length on $e_j$ 
%from $\alpha_i$ to $r(t)$ is $t$ times the length on $e_j$ from $\alpha_j$
%to $y$. This defines a continuous section of $\chi$ on $E_3$. 
%$E_1=\bigcup\limits_{\alpha_{i},\alpha_{j} \text{vertices}}(\alpha_{i},\alpha_{j})\cup\bigcup\limits_{e \text{open edges}}\Delta_{e}$, $E_{2}=\bigcup\limits_{\alpha_{i} \text{vertices}}(\alpha_{i},\text{rest without $\alpha_{i}$})\cup(\text{rest without $\alpha_{i}$}, \alpha_{i})$, $E_{3}=\text{rest}$ so $\diTC{G}=3$
\end{proof}

The following example shows that the directed topological complexity can be smaller than the usual complexity. 
%For example, let us take $G$, the graph with two vertices $a$ and $b$ with $3$ edges (not crossing) linking a to b and oriented from $a$ to $b$~: 

\begin{example}{\rm 
Consider the following graph~: 
\begin{center}
\begin{tikzpicture}
 % définition des styles
\tikzstyle{cercle}=[circle,draw,fill=yellow!50,text=blue]
 \tikzstyle{fleche}=[->,>=latex,very thick]
% les nœuds
 \node[cercle] (a) at (0,0) {b};
 \node[cercle] (b) at (2,0) {e};
 % les flèches
  \draw[fleche] (a)to[bend left](b);
  \draw[fleche] (a)to[bend right](b);
  \draw[fleche] (a)to(b);
  %  \draw[estun] (L)--(Q.south east);  \draw[estun] (L)--(P);
%  \draw[estun] (C)to[bend right](Q.east); \draw[estun] (C)to[bend left](P);
%  \draw[estun] (C)--(L.south east);  \draw[estun] (C)to[bend left](R);
 % la légende
%  \draw[estun] (-4.5,2.5)--(-3,2.5)node[midway,above]{est un};
\end{tikzpicture}
\end{center}
A patchwork for $\Gamma_G$~: $F_1=\{(b,e)\}$ and $F_2=\Gamma_G \backslash
F_1$. 
%It is clear that $E=\{E_1,E_2\}$ forms a patchwork for $G$. First, there is
%an obvious continuous section from $E_1$ to $PG$ (any of the 3 edges), and from $E_2$
%to $PG$ since there is just one directed path up to reparameterization from any point $x$
%to any point $y$ where $(x,y)\in E_2$. 
%Secondly, there is no continuous section from 
%$\Gamma_G$ to $PG$ because of the same argument as for $\overrightarrow{S}^1$. We
% will see a more general statement about this in Section \ref{natural}. 
We thus have $\diTC{G}=2$ (here again, it is easy to see that there is no global section). But $\tc(G)=3$.
}
\end{example}

However in the special case of strongly connected graphs, the directed and classical
topological complexity coincide: 

\begin{proposition}
Let $G$ be a strongly connected directed graph. Then 
$$\diTC{G}=\tc(G)=\min(b_{1}(G),2)+1.$$
\end{proposition}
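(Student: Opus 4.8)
The plan is to combine the two general bounds that are already available with the known computation of the classical topological complexity of a graph. By Proposition \ref{SCD}, strong connectivity of $G$ gives $\tc(G)\le\diTC{G}$, and by Proposition \ref{generalgraphs} we have $\diTC{G}\le 3$. On the classical side, a connected graph $G$ is homotopy equivalent to a wedge of $b_1(G)$ circles, and Farber's computation of the topological complexity of such spaces (see \cite{farber1},\cite{farber}) yields $\tc(G)=\min(b_1(G),2)+1$. It therefore remains to identify $\diTC{G}$ in the three cases $b_1(G)\ge 2$, $b_1(G)=1$, $b_1(G)=0$.

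The case $b_1(G)\ge 2$ is immediate: then $\tc(G)=3$, so $3=\tc(G)\le\diTC{G}\le 3$ by the two propositions above, and all three equal $\min(b_1(G),2)+1$. For $b_1(G)=0$ the graph $G$ is a finite tree; I would argue that a tree with at least two vertices has a leaf $v$ whose unique incident edge is oriented either into or out of $v$, so $v$ has in-degree $0$ or out-degree $0$, and hence some pair $(w,v)$ or $(v,w)$ with $w\neq v$ fails to lie in $\Gamma_G$, contradicting strong connectivity. Thus $G$ is a single point, $\Gamma_G$ is a point, and $\diTC{G}=1=\tc(G)=\min(0,2)+1$.

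The main case is $b_1(G)=1$. Here I would first classify $G$: from $b_1(G)=|E|-|V|+1=1$ we get $|E|=|V|$, while strong connectivity forces $\deg^+(v)\ge 1$ and $\deg^-(v)\ge 1$ for every vertex $v$; since $\sum_v\deg^+(v)=\sum_v\deg^-(v)=|E|=|V|$, every in-degree and out-degree equals $1$, so $G$ is a disjoint union of directed cycles, and connectedness makes it a single one (in the extreme case $|V|=|E|=1$ a lone self-loop). Such a directed cycle, with the d-structure whose d-paths are concatenations of the small directed paths of Examples \ref{ex1} and \ref{ex4}, is precisely the positively-oriented paths around the circle, i.e. $G$ is d-homeomorphic to the directed loop $\diOo$ of Example \ref{ex4}. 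Hence $\diTC{G}=\diTC{\diOo}=2=\tc(G)=\min(1,2)+1$.

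The step that needs care — the main obstacle — is this last classification and identification: one must check that a ``subdivided directed cycle'' carries exactly the d-structure of $\diOo$ (the concatenation closure of the small directed paths recovers exactly $P(\diOo)$), so that the computed value $\diTC{\diOo}=2$ applies verbatim. Once that is in place, the proposition follows by assembling the three cases, each of which invokes only Proposition \ref{SCD}, Proposition \ref{generalgraphs}, Example \ref{ex4}, and the classical formula for $\tc$ of a graph.
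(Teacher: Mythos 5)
Your proof is correct and follows essentially the same route as the paper: the lower bound from Proposition \ref{SCD}, the upper bound from Proposition \ref{generalgraphs}, and a case analysis on $b_1(G)\in\{0,1,\ge 2\}$ reducing the middle case to the directed cycle of Example \ref{ex4}. You supply more detail than the paper does for the two classification steps (that a strongly connected tree is a point, and that $b_1=1$ forces a directed cycle), which the paper simply asserts.
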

\begin{proof}
By \cite{farber}, we know that $TC(G)=\min(b_1(G),2)+1$. As $G$ is strongly
connected, we have $\diTC{G}\geq \tc(G)=\min(b_1(G),2)+1$, see Proposition \ref{SCD}.
To prove that we have in fact an equality consider the following cases: 
\begin{itemize}
\item $b_{1}(G)=0$. Since $G$ is contractible and strongly connected, $G$ must be a single point. 
Then $\diTC G =1$ and the result follows. 
%, and strongly connected. 
%If $G$ had strictly more than one vertex, it would contain at least $\diS1$, 
%which would contradict $b_1(G)=0$. Therefore, $G$ 
%is a point and the result is obvious~: . 
\item $b_{1}(G)=1$. It is easy to see that in this case $G$ must be a cycle, i.e. $G$ has $n$ vertices $v_1, v_2, \dots, v_n$ and $n$ oriented edges 
$e_1, e_2, \dots, e_n$ where $e_i$ connects $v_i$ with $v_{i+1}$ for $i=1, \dots, n-1$ and $e_n$ connects $v_n$ and $v_1$. 
We see that $\diTC G =2$ similarly to Example \ref{ex4}.

%~: suppose $G$ has strictly more than 2 vertices. By
%strong connectivity, we would have at least two independent cycles, between
%two different pairs of vertices, which would contradict our hypothesis. Therefore,
%$G$ is strongly connected, has two vertices and has just one cycle, 
%hence is $\diS1$. 
%is either $O^{1}$ or $\overrightarrow{\mathbb{S}^{1}}$ and the result is obvious
As we have seen already, $\diTC{\diS1}=2$. 
\item $b_{1}(G)\ge 2$. Then $\tc(G)=3$ (see above) and hence $\diTC G \ge 3$. On the other hand, $\diTC G \le 3$ by Proposition \ref{generalgraphs}. 
Thus $\diTC{G}=3$. 
\end{itemize}
\end{proof}

\section{Higher-dimensional directed spaces}

%\subsection{General upper bounds}

We begin by recalling the definition of ``geometric'' precubical
sets~\cite{Fajstrup:2005:DDC:2610563.2610596}. 
%The reason for this terminology should become
%apparent in Proposition~\ref{prop:geometric-pcs-real}: 
The 
interest \cite{bookdag} of such precubical sets is
that the precubical semantics of most programs is a geometric precubical set. 
Also they are sufficiently tractacle for us to compute, in some cases, their directed
topological complexity, or more precisely, the directed topological complexity of
their directed geometric realization, that we call, cubical complexes
(see Definition \ref{def:cubical-complex}). 

\begin{definition}
  \label{def:geometric-pcs}
  A precubical set~$C$ is geometric when it satisfies the following
  conditions:
  \begin{enumerate}
  \item \emph{no self-intersection}: two distinct iterated faces of a cube in
    $C$ are distinct
%, \ie given two morphisms $\phi,\psi:m\to n$ in $\pcc$, if there
%    exists $c\in C(n)$ such that $C(\phi)(c)=C(\psi)(c)$ then $\phi=\psi$,
  \item \emph{maximal common faces}: two cubes admitting a common face admit a
    maximal common face.
    % , \ie there exists $c\in C(k)$ such that given $d,e\in C(n)$ given
    % $c,d\in C(n)$ such that $C(\phi)(c)=C(\psi)(d)$ for some
    % morphisms $\phi,\psi:m\to n$
    % \TODO{Phrase this better by saying that there exists a cube which is a
    % face of the big one and admits the two faces as faces.}
  \end{enumerate}
\end{definition}

\begin{definition}
  \label{def:cubical-complex}
  % \TODO{The cubical complex is not only the topological space actually but also
  % the data of the $n_\lambda$, etc\ldots}
  A cubical complex is $K$ is a topological space of the form
  \[
  K=\left(\bigsqcup_{\lambda\in\Lambda}I^{n_\lambda}\right)/{\approx}
  \]
  where $\Lambda$ is a set, $(n_\lambda)_{\lambda\in\Lambda}$ is a family of
  integers, and $\approx$ is an equivalence relation, such that, writing
  $p_\lambda:I^{n_\lambda}\to K$ for the restriction of the quotient map
  $\bigsqcup_{\lambda\in\Lambda}I^{n_\lambda}\to K$, we have
  \begin{enumerate}
  \item for every $\lambda\in\Lambda$, the map $p_\lambda$ is injective,
  \item given $\lambda,\mu\in\Lambda$, if $p_\lambda(I^{n_\lambda})\cap
    p_\mu(I^{n_\mu})\neq\emptyset$ then there is an isometry from a face
    $J_\lambda$ of~$I^{n_\lambda}$ to a face $J_\mu$ of $I^{n_\mu}$ such that
    $p_\lambda(x)=p_\mu(y)$ if and only if $y=h_{\lambda,\mu}(x)$.
  \end{enumerate}
\end{definition}

As shown in \cite{cat0}~: 

\begin{proposition}
  \label{prop:geometric-pcs-real}
  The realization of a geometric precubical set is a cubical complex.
\end{proposition}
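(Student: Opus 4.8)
The plan is to unwind the two definitions and show that the data describing a geometric precubical set $C$ is exactly the data (up to the obvious bookkeeping) of a cubical complex in the sense of Definition \ref{def:cubical-complex}. First I would recall that the geometric realization of a precubical set $C$ is built as $|C|=\left(\bigsqcup_{n}\bigsqcup_{c\in C_n} I^{n}\right)/{\sim}$, where the relation $\sim$ is generated by identifying, for each face map $\delta$ and each cube $c$, the copy of $I^{n-1}$ indexed by $\delta c$ with the corresponding face of the copy of $I^{n}$ indexed by $c$. So the indexing set $\Lambda$ is taken to be $\bigsqcup_n C_n$, with $n_\lambda=n$ when $\lambda\in C_n$, and the quotient map restricted to the $\lambda$-cell gives the characteristic map $p_\lambda:I^{n_\lambda}\to|C|$. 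The task is then to verify conditions (1) and (2) of Definition \ref{def:cubical-complex} from conditions (1) \emph{(no self-intersection)} and (2) \emph{(maximal common faces)} of Definition \ref{def:geometric-pcs}.

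For condition (1), injectivity of $p_\lambda$: a priori $p_\lambda$ could fail to be injective only if two distinct iterated faces of the cube $\lambda$ get glued together in $|C|$, or if $\lambda$ itself is identified with one of its proper faces; the \emph{no self-intersection} hypothesis says precisely that distinct iterated faces of a cube are distinct elements of $C$, and one checks that under this hypothesis the gluings never collapse the interior or identify two faces of a single cube, so $p_\lambda$ is injective. For condition (2), given $\lambda,\mu$ with $p_\lambda(I^{n_\lambda})\cap p_\mu(I^{n_\mu})\neq\emptyset$, the intersection is a union of images of common faces (cubes that are iterated faces of both $\lambda$ and $\mu$); the \emph{maximal common faces} hypothesis guarantees there is a single maximal such common face $\nu$, which is a face $J_\lambda$ of $I^{n_\lambda}$ and a face $J_\mu$ of $I^{n_\mu}$, and the canonical identification of both with $I^{n_\nu}$ (an isometry of cubes, i.e. a coordinate permutation composed with the face inclusions) furnishes the required isometry $h_{\lambda,\mu}:J_\lambda\to J_\mu$; one then checks $p_\lambda(x)=p_\mu(y)\iff y=h_{\lambda,\mu}(x)$ by tracing through how the equivalence relation $\sim$ acts.

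I expect the main obstacle to be the bookkeeping in condition (2): showing that the set-theoretic intersection $p_\lambda(I^{n_\lambda})\cap p_\mu(I^{n_\mu})$ is \emph{exactly} the image of the maximal common face $\nu$, and not something larger or disconnected. This requires knowing that two points of $|C|$ identified by $\sim$ lie in (iterated) faces of cubes related by a zig-zag of face maps, and that the \emph{no self-intersection} condition prevents this zig-zag from producing extra identifications — so conditions (1) and (2) of Definition \ref{def:geometric-pcs} really have to be used together. Modulo this, the verification of the isometry property is routine, since all the gluing maps in the realization of a precubical set are, by construction, inclusions of faces (no coordinate permutations are even needed if one sets things up with the standard face maps, though allowing isometries as in Definition \ref{def:cubical-complex} gives extra room). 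A reference to \cite{cat0} can be cited for the details, as the statement already indicates.
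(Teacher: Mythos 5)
The paper does not actually prove this proposition: it is imported verbatim from \cite{cat0} (``As shown in \cite{cat0}''), so there is no internal proof to compare yours against. Judged on its own, your outline is the right strategy and matches what is done in that reference: take $\Lambda=\bigsqcup_n C_n$, let the $p_\lambda$ be the characteristic maps of the realization, and feed \emph{no self-intersection} into condition (1) and \emph{maximal common faces} into condition (2) of Definition \ref{def:cubical-complex}. The correspondence of hypotheses to conclusions is exactly right.

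That said, the proposal as written is a roadmap rather than a proof, and the gap is the one you yourself flag. The equivalence relation in the realization is only \emph{generated} by the face identifications, so two points can a priori be identified through a finite zig-zag of face maps passing through several cubes. Everything rests on showing that geometricity collapses any such chain: for condition (1) you must rule out a chain that leaves a cube $\lambda$ and returns to it identifying two distinct points (this is where \emph{no self-intersection} enters, but ``one checks that the gluings never collapse the interior'' is precisely the assertion to be proved, not a proof of it); for condition (2) you must show the set-theoretic intersection $p_\lambda(I^{n_\lambda})\cap p_\mu(I^{n_\mu})$ is the image of common faces and then that the \emph{maximal} common face absorbs all of them into a single face pair $(J_\lambda,J_\mu)$ with the stated if-and-only-if. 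Until that combinatorial analysis of the generated relation is carried out, both conditions remain unverified. Since the paper itself delegates the entire argument to \cite{cat0}, deferring these details to that reference is consistent with the paper's treatment, but you should be explicit that the zig-zag lemma is the substance of the proof and not a routine check.
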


Generalising Proposition \ref{generalgraphs} one may show that $$\diTC{X}\leq 2\dim(X)+1$$ for nice cubical complexes $X$.
We shall address this question elsewhere. 

\subsection{The directed spheres}

Let $\Box^{n}$ be the cartesian
product of $n$ copies of the unit segment with the d-structure generated by the standard ordering on $[0,1]$. Its d-space structure is generated by
a partially-ordered space \cite{bookdag}. %, in which the future cone is of dimension $n$, in the interior of $\Box^n$, and dimension $n-1$ in its boundary. 

\begin{definition}
The directed sphere  $\diSn$ of dimension $n$  is defined as the boundary $\partial \Box^{n+1}$ of the hypercube $\Box^{n+1}$. Its d-structure is inherited
from the one of $\Box^{n+1}$. 
\end{definition}

\begin{proposition} 
$\diTC \diSn =2$ for any $n\ge 1$. 
\end{proposition}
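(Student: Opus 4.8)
The plan is to prove $\diTC{\diSn}\ge 2$ and $\diTC{\diSn}\le 2$ separately; write $X=\partial\Box^{n+1}$, $\mb 0=(0,\dots,0)$, $\mb 1=(1,\dots,1)$, so that $\mb 0$ and $\mb 1$ are the unique minimum and maximum of the underlying poset of $X$. I first record that $(\mb 0,y)\in\Gamma_X$ and $(x,\mb 1)\in\Gamma_X$ for every $x,y\in X$: if $y$ has a zero coordinate $y_i=0$ the straight segment $\mb 0\to y$ lies in the facet $\{x_i=0\}$ and is directed, while if $y$ has no zero coordinate (so $y_j=1$ for some $j$) the concatenation of the segment $\mb 0\to e_j$ (lying in $\bigcap_{k\ne j}\{x_k=0\}$) with the segment $e_j\to y$ (lying in $\{x_j=1\}$) is directed; the case of $(x,\mb 1)$ is dual.

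For the lower bound, suppose $\diTC{\diSn}=1$, i.e.\ that $\chi\colon PX\to\Gamma_X$ admits a \emph{continuous} section $s$. Restricting $s$ to $\{\mb 0\}\times X\subset\Gamma_X$ and composing with the continuous evaluation $PX\times[0,1]\to X$ gives $H\colon X\times[0,1]\to X$, $H(y,t)=s(\mb 0,y)(t)$; since $s(\mb 0,y)$ is a directed path from $\mb 0$ to $y$ we have $H(\cdot,0)\equiv\mb 0$ and $H(\cdot,1)=\mathrm{id}_X$, so $X\cong S^n$ would be contractible --- impossible for $n\ge 1$. Hence $\diTC{\diSn}\ge 2$. (The same argument shows $\diTC{(Y,PY)}\ge 2$ for any non-contractible d-space $Y$ possessing a point from which every point is reachable by a directed path.)

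For the upper bound I would build a patchwork $\Gamma_X=F_1\sqcup F_2$ into two ENRs together with a continuous section of $\chi$ over each. The geometric input is that a directed path of $X$ is coordinatewise non-decreasing, so by monotonicity it can traverse the interior of at most one facet $\{x_i=0\}$ and at most one facet $\{x_j=1\}$; hence every $(x,y)\in\Gamma_X$ is joined by a directed path of the form $[x\to z]\star[z\to y]$, a concatenation of two straight directed segments through a ``corner'' $z$ with $z_i=0$, $z_j=1$ (or by a single straight segment when $x$ and $y$ lie on a common facet). One takes $F_1$ to be a \emph{closed} semialgebraic (hence ENR) set of pairs on which the corner can be selected by a single continuous formula, with planner $(x,y)\mapsto[x\to z(x,y)]\star[z(x,y)\to y]$; then $F_2=\Gamma_X\setminus F_1$ is open in $\Gamma_X$, hence also an ENR.

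The hard part is arranging that $\chi$ also has a continuous section over $F_2$, and this dictates how $F_1$ must be chosen. The difficulty concentrates at the \emph{degenerate} pairs --- those $(x,y)\in\Gamma_X$ with $|\{i:x_i=0\text{ and }y_i=1\}|\ge 2$, in particular $(\mb 0,\mb 1)$: near such a pair there are approaching pairs that force any continuously chosen directed path to meet both $\{x_i=0,x_j=1\}$ and $\{x_j=0,x_i=1\}$ for suitable $i\ne j$, which a single monotone path cannot do (this is precisely why $\diTC{\diSn}\ne 1$). So $F_1$ must contain all degenerate pairs and must retain only one of the two competing ``directions'' around each; $F_1$ then has to be enlarged to a closed ENR still carrying a continuous planner, and on $F_2$ one interpolates the corner $z(x,y)$ --- replacing abrupt coordinatewise choices $z_k\in\{0,y_k\}$ by continuous clips, suitably modified near the degenerate locus --- so that the planner becomes continuous on all of $F_2$. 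Verifying this continuity is a finite but delicate analysis of the directed-path combinatorics of $\partial\Box^{n+1}$ near the degenerate pairs, and is the technical core; together with the lower bound it gives $\diTC{\diSn}=2$.
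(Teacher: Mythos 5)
First, note that the paper does not actually prove this proposition: the case $n=1$ is referred back to Example~\ref{ex2}, and the general case $n\ge 2$ is delegated to the reference \cite{Grant}. So your attempt is being measured against a genuinely nontrivial target. Your lower bound is correct and complete: reachability of every point of $X=\partial\Box^{n+1}$ from $\mb 0$ is verified, and a global continuous section restricted to $\{\mb 0\}\times X$ yields, via the (continuous) evaluation map, a contraction of $X\cong S^n$, which is impossible. This is in fact a cleaner and more general argument than the one used for $n=1$ in Example~\ref{ex2} (which analyses connected components of $\chi^{-1}(b,e)$), and your parenthetical remark that it applies to any non-contractible d-space with a point from which everything is reachable is a nice observation.

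The upper bound, however, has a genuine gap: it is a program rather than a proof. You never exhibit the sets $F_1$ and $F_2$, the map $z(x,y)$ selecting the corner, or the claimed continuous section over $F_2$; you only assert that $F_1$ ``has to be enlarged to a closed ENR still carrying a continuous planner'' and that on $F_2$ one ``interpolates'' the corner, and you yourself flag the continuity verification near the degenerate pairs as ``the technical core.'' That core is precisely where the difficulty of the theorem lives --- it is not a routine check, since a naive coordinatewise choice of $z$ is discontinuous exactly where two or more indices $i$ satisfy $x_i=0$, $y_i=1$, and it is not obvious that a single closed ENR $F_1$ can absorb all of these loci while still admitting one continuous planner (nor that the complement then does). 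In addition, the intermediate claim that every $(x,y)\in\Gamma_X$ is joined by a two-segment directed path through a single corner is asserted from a monotonicity heuristic but not proved. As it stands the argument establishes $\diTC{\diSn}\ge 2$ but not $\diTC{\diSn}\le 2$, so the proposition is not yet proved; for the upper bound one would need either to carry out the construction in full or, as the paper does, to invoke \cite{Grant}.
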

The case $n=1$ is covered by Example \ref{ex2}; see \cite{Grant} for 
the general case.

\section{Directed homotopy equivalence and topological complexity}

%\subsection{Directed contractibility}

%\subsection{General directed equivalences}

As for now, there is no uniquely well-established notion of directed homotopy equivalence
between directed spaces, although there has been numeral proposals, among which one
linked to our present problem \cite{ArXiv}. 

We take the view here that directed homotopy equivalences should at least induce equivalent
trace categories, viewed with enough structure. We will show in the following sections that directed topological
complexity is an invariant of simple equivalences that should be implied by any ``reasonable'' directed equivalences. 

\subsection{A simple dihomotopy equivalence, and dicontractibility}

In \cite{ArXiv}, one of the authors introduced a notion of dihomotopy equivalence. The most important ingredient
are that two equivalent d-spaces should be homotopy equivalent in some naive way, and their trace spaces should be
homotopy equivalent as well\footnote{In \cite{ArXiv}, an extra ``bisimulation relation'' was added to the definition,
that we do not use here.}. First, we need to define continuous gradings~:
\begin{definition}
Let $v, v' \in V$, $q \ : \ U \rightarrow V$ be a continuous map, and $W \subseteq U\times U$ 
be the inverse image by $q \times q$ of $(v,v')$. 
Suppose we have a map $$h \ : \ PV(v,v')\times W \rightarrow PU$$ 
which is continuous and is %between the trace space of $V$
%between $v$ and $v'$, times $W$, and the trace space of $U$ such that
such that for all $(u,u')\in W$, $h(p,u,u') \in PU(u,u')$. 

In this case, we say that $h$ is continuously graded over $W$, and
by abuse of notation, we write this graded map as a $h \ : \ PV \multimap PU$ given by grading
$h_{u,u'} \ : \ PV(q(u),q(u'))\rightarrow PU(u,u')$, varying continuously for $(u,u')\in W$ in $PU^{PV(v,v')}$, with the compact-open topology. 
\end{definition}

%\paragraph{Remark :} Note that for $f \ : \ X \rightarrow Y$, map from d-space
%$X$ to d-space $Y$, the induced map $Pf \ : \ PX \rightarrow PY$ with $Pf(p)=f\circ p$ is continuouly graded
%over $\Gamma_X$ with grading induced by $q = Id \ : \ \Gamma_X \rightarrow \Gamma_X$. 

Any reasonable dihomotopy equivalence should be in particular a d-map inducing a (classical) homotopy equivalence
that also induced (classical) homotopy equivalences on the corresponding path spaces. We call this minimum requirement,
a simple dihomotopy equivalence~: 

\begin{definition}
\label{def:dihomotopyequiv}
Let $X$ and $Y$ be two d-spaces. A simple dihomotopy equivalence between $X$ and $Y$ is 
a d-map $f \ : \ X \rightarrow Y$ such that~: 
\begin{itemize}
\item $f$ is a d-homotopy equivalence between $X$ and $Y$, i.e. a homotopy equivalence with homotopy inverse 
a d-map $g \ : \ Y \rightarrow X$. 
\item 
There exists a map $F \ : \ PY \multimap PX$,  
continuously graded by
$F_{x,x'} \ : \ PY(f(x),f(x')) \rightarrow PX(x,x')$
for $(x,x') \in \Gamma_X$, 
%$Pf_{x} \ : \ PX(x) \rightarrow PY(f(x))$ and  
such that $(Pf_{x,x'},F_{x,x'})$ is a homotopy equivalence\footnote{$Pf$ is the map
on paths which is the natural pointwise extension of $f$, i.e. $Pf(u)$ is the path
$t\rightarrow f(u(t))$ when $u$ is a path in $X$.} between $PX(x,x')$ and
$PY(f(x),f(x'))$ 
\item 
%Maps $Pg \ : \ PY \rightarrow PX$ and 
There exists a map $G \ : \ PX \multimap PY$, 
continuously graded by 
$G_{y,y'} \ : \ PX(g(y),g(y')) \rightarrow PY(y,y')$ 
for $(y,y') \in \Gamma_Y$
%$Pg_{c,d} \ : \ PY(c,d) \rightarrow PX(g'(c,d))$ and 
such that $(Pg_{y,y'},G_{y,y'})$ is a homotopy equivalence between
$PY(y,y')$ and $PX(g(y),g(y'))$. 
%\item $R=\{(x,(Pf_{x},F_{x}),f(x)) | x\in \Gamma_X\}
%\cup \{(g(y),(Pg_{y},G_{y}),y) | y \in \Gamma_Y\}$ is a
%bisimulation of homotopy equivalences. 
%PARLER DE BISIM... (pour we et aussi diagrammes a part)
\end{itemize}
\end{definition}

%\ForAuthors{I have to add again ``continuity'' in $a,b$}

We sometimes write $(f,g,F,G)$ for the full data associated to the simple dihomotopy
equivalence $f \ : \ X \rightarrow Y$. 

\paragraph{Remark : }
This definition clearly bears a lot of similarities with Dwyer-Kan weak equivalences
in simplicial categories (see e.g. \cite{bergner04}). The main ingredient
of Dwyer-Kan weak equivalences being exactly that $Pf$ induces a homotopy 
equivalence. But our definition adds continuity and directedness requirements 
which are instrumental to our theorems and to the classification
of the underlying directed geometry.

\begin{example}
%\label{iso}
{\rm \begin{itemize}
\item Let $X$, $Y$ be two directed spaces. Suppose $X$ and $Y$ are isomorphic as d-spaces i.e. that
there exists $f \ : \ X \rightarrow Y$ a dmap,
which has an inverse, also a dmap. Then $X$ and $Y$ are simply directed homotopy equivalent.
The proof goes as follows. 
Take $f=u$, $g=u^{-1}$, $Pg=F$ the pointwise application
of $u^{-1}$ on paths in $Y$ and $Pf=G$ the pointwise application of $u$ on
paths in $X$. This data obviously forms a directed homotopy equivalence.
\item The directed unit segment $\overrightarrow{I}$ is simply dihomotopically equivalent to a point. 
Consider the unique map $f \ : \ \overrightarrow{I} \rightarrow \{*\}$, and $g \ : \ \{*\} \rightarrow \overrightarrow{I}$ (the inclusion of the point
as $0$ in $\overrightarrow{I}$). Define $F \ : \ P\{*\} \rightarrow P\overrightarrow{I}$ by $F(*)$ being the constant map on $0$ and
$G \ : \ P\overrightarrow{I} \rightarrow P\{*\}$ to be the unique possible map (since $P\{*\}$ is a singleton $\{*\}$). 
\end{itemize}
}
\end{example}

As expected, directed topological complexity is an invariant of simple dihomotopy equivalence~: 

\begin{proposition}
\label{dihominvariance}
Let $X$ and $Y$ be two simply dihomotopically equivalent d-spaces. Then $\diTC{X}=\diTC{Y}$. 
\end{proposition}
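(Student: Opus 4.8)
The plan is to show that a patchwork for $\Gamma_Y$ can be transported to a patchwork for $\Gamma_X$ of the same cardinality, so that $\diTC X \le \diTC Y$; by symmetry of the definition of simple dihomotopy equivalence we then get the reverse inequality and hence equality. Throughout I write $(f,g,F,G)$ for the full data of the simple dihomotopy equivalence $f: X\to Y$.

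First I would set up the transfer of sections. Suppose $s: \Gamma_Y \to PY$ is a (not necessarily continuous) section of $\chi_Y$, continuous on each piece of a patchwork $\Gamma_Y = E_1 \cup \dots \cup E_n$. I want to build a section $\sigma: \Gamma_X \to PX$ of $\chi_X$. The natural candidate uses the graded homotopy inverse $F: PY \multimap PX$: for $(x,x') \in \Gamma_X$, first apply $\Gamma f$ to land at $(f(x),f(x')) \in \Gamma_Y$, then apply $s$ to get a path in $PY(f(x),f(x'))$, then apply the grading $F_{x,x'}: PY(f(x),f(x')) \to PX(x,x')$. That is, $\sigma(x,x') = F_{x,x'}\big(s(f(x),f(x'))\big)$. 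By the defining property of the continuous grading, $F_{x,x'}$ sends $PY(f(x),f(x'))$ into $PX(x,x')$, so $\sigma(x,x')$ is indeed a d-path from $x$ to $x'$, i.e. $\chi_X \circ \sigma = \mathrm{Id}$. Here I need to check that $(f(x),f(x'))$ really lies in $\Gamma_Y$ whenever $(x,x')\in\Gamma_X$ — this is immediate since $f$ is a d-map, so it carries a d-path from $x$ to $x'$ to a d-path from $f(x)$ to $f(x')$.

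Next I would produce the patchwork on the $X$ side. Set $F_i := (\Gamma f)^{-1}(E_i) \subset \Gamma_X$. These are pairwise disjoint and cover $\Gamma_X$ because $\Gamma f: \Gamma_X \to \Gamma_Y$ is continuous (hence the preimages are well-behaved) and the $E_i$ partition $\Gamma_Y$ — although here I must be careful: to know the $F_i$ are ENRs I need that preimages of ENRs under $\Gamma f$ are ENRs, which is not automatic, so I would instead first refine or argue that $\Gamma f$ restricted appropriately behaves well, or invoke that in all cases of interest (ENRs, the spaces we work with) this holds; more robustly, one takes the $F_i$ as the pieces and checks ENR-ness using that $\Gamma f$ is a map between ENRs. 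On each $F_i$, the restriction $\sigma|_{F_i}$ is continuous: it is the composite of $\Gamma f|_{F_i}: F_i \to E_i$ (continuous), then $s|_{E_i}: E_i \to PY$ (continuous), then the graded map, which by hypothesis varies continuously in $(x,x') \in \Gamma_X$ in the compact-open topology — so the total assignment $(x,x') \mapsto F_{x,x'}(s(\Gamma f(x,x')))$ is continuous on $F_i$. This gives $\diTC X \le n$, and taking $n = \diTC Y$ yields $\diTC X \le \diTC Y$. Applying the same argument to the homotopy inverse $g: Y \to X$ with the data $(g,f,G,F)$ gives $\diTC Y \le \diTC X$, hence $\diTC X = \diTC Y$.

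The main obstacle I anticipate is the bookkeeping around the continuity of the graded maps and the ENR condition on the pieces $F_i = (\Gamma f)^{-1}(E_i)$. The continuity of $\sigma|_{F_i}$ hinges on correctly interpreting the statement that $h_{u,u'}$ "varies continuously" — one has to unwind this as saying the adjoint map $W \to PX^{PV(v,v')}$ is continuous and then compose it correctly with $s|_{E_i}$ and $\Gamma f$, which requires a small exponential-law / compact-open-topology argument (composition of paths-valued maps is jointly continuous on suitable spaces). The ENR issue is more delicate: preimages of ENRs need not be ENRs in general, so strictly one either restricts to a setting where $\Gamma f$ is nice enough (e.g. when $\Gamma_X, \Gamma_Y$ are ENRs and $\Gamma f$ is, say, a map for which this holds), or one must argue that the relevant $\Gamma_X$ decomposes compatibly; I would flag this as the technical heart of the argument while the section-transfer itself is formal. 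The homotopy-equivalence hypotheses $(Pf_{x,x'},F_{x,x'})$ being homotopy equivalences are, interestingly, not needed for this inequality — only that $F$ is a graded section-like map landing in the right path spaces — but they are what make the definition symmetric and hence give the two-sided bound.
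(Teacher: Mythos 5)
Your proof is essentially the paper's own: pull back the patchwork along $\Gamma f$ (resp.\ $\Gamma g$), transport the section piecewise via the continuously graded map $F$ (resp.\ $G$) as $\sigma(x,x')=F_{x,x'}\bigl(s(f(x),f(x'))\bigr)$, and conclude by symmetry. The ENR concern you flag for the preimages $(\Gamma f)^{-1}(E_i)$ is also present in the paper's argument, which simply asserts that each such preimage is empty or an ENR because the pieces are ENRs and the map is continuous --- so you are, if anything, more candid about the one delicate point.
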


\begin{proof}
As $X$ and $Y$ are dihomotopy equivalent, we have $f \ : \ X \rightarrow Y$ and 
$g \ : \ Y \rightarrow X$ dmaps, which form a homotopy equivalence between $X$ and $Y$. 
We also get $G$ a continuously graded map from $PX$ to $PY$, which can be restricted to 
$G_{y,y'} : PX(g(y),g(y')) \rightarrow PY(y,y')$, inverse modulo homotopy
to $Pg_{y,y'}$ ; and $F$ a continuously graded map from $PY$ to $PX$ such that its restrictions to $PX(x,x')$, for $(x,x')\in
\Gamma_X$, $F_{x,x'} \ : \ PX(x,x')
\rightarrow PY(f(x),f(x'))$ is inverse modulo homotopy to $Pf_{x,x'}$.

Suppose first $k=\diTC{X}$. Thus we can write $\Gamma_X=F^X_1 \cup \ldots \cup F^X_k$ such 
that we have a map $s \ : \ \Gamma_X \rightarrow PX$ with $\chi \circ s=Id$ and $s_{|F^X_i}$
is continuous. 

Define $F^Y_i=\{ u \in \Gamma_Y \ | \ g(u)\in F^X_i\}$ (which is either empty or an ENR 
as $F^X_i$
is ENR and $g$ is continuous) and define $t_{|F^Y_i}(u)=G_u \circ s_{|F^X_i}
\circ g(u)\in PY(u)$ for all $u \in F^Y_i \subseteq \Gamma_Y$. This is a continuous map in $u$ since
$s_{|F^X_i}$ is continuous, $g$ is continuous, and $G$ is continuous and graded. Therefore
$\diTC{Y} \leq \diTC{X}$. 

Conversely, suppose $l:\diTC{Y}$, $\Gamma_Y=F^Y_1 \cup \ldots \cup F^Y_l$
such 
that we have a map $t \ : \ \Gamma_Y \rightarrow PY$ with $\chi \circ t=Id$ and $t_{|F^Y_i}$
is continuous. Now define
$F^X_i=\{ u \in \Gamma_X \ | \ f(u)\in F^Y_i\}$ (which is either empty or an ENR
as $F^Y_i$ is ENR and $f$ is continous) 
and define $s_{|F^X_i}(u)=F_u \circ t_{|F^Y_i}
\circ f(u)\in PX(u)$ for all $u \in F^X_i \subseteq \Gamma_X$. This is a continuous map in $u$ since
$t_{|F^Y_i}$ is continuous, $f$ is continuous, and $F$ is continuous and graded. Therefore
$\diTC{X} \leq \diTC{Y}$. Hence we conclude that $\diTC{X}=\diTC{Y}$ and directed topological
complexity is an invariant of dihomotopy equivalence.  
\end{proof}

A very simple application is that some spaces must have directed topological complexity of 1~: 

\begin{definition}
A d-space $X$ is dicontractible if it is dihomotopically equivalent to a point. 
\end{definition}

By applying Proposition \ref{dihominvariance}, as the directed topological complexity of a point is 1, 
all dicontractible spaces have complexity 1, as in the undirected case. Similarly to the undirected case again, 
although with extra conditions, the converse is also true~:  

\begin{theorem}
\label{thm:contractible}
Suppose $X$ is a contractible d-space. 
Then, the 
dipath space map has a continuous section if and only if $X$ is dicontractible. 
\end{theorem}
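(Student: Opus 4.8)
The statement has two directions. The ``only if'' direction is immediate from Proposition~\ref{dihominvariance}: if $X$ is dicontractible then $\diTC{X}=\diTC{\{*\}}=1$, which means $\chi:PX\to\Gamma_X$ admits a continuous section over a partition of $\Gamma_X$ into one ENR, i.e.\ a single global continuous section. (Here one uses that a dicontractible space, being in particular homotopy equivalent to a point and strongly connected in the relevant sense, has $\Gamma_X$ the whole of $X\times X$; I would make this observation explicitly.) So the real content is the ``if'' direction: assuming $X$ is contractible \emph{and} that $\chi$ has a continuous section $s:\Gamma_X\to PX$, produce a simple dihomotopy equivalence between $X$ and a point.

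First I would fix a point $x_0\in X$ and a contraction, i.e.\ a homotopy $H:X\times I\to X$ with $H(\cdot,0)=\mathrm{id}_X$ and $H(\cdot,1)\equiv x_0$; note however that the path $t\mapsto H(x,t)$ need not be a d-path, so $H$ itself is not directly usable as directed data. The maps for the dihomotopy equivalence are forced: $f:X\to\{*\}$ and $g:\{*\}\to X$, $g(*)=x_0$; these are d-maps and $f\circ g=\mathrm{id}$, while $g\circ f\simeq\mathrm{id}_X$ via $H$, so the underlying d-homotopy-equivalence condition holds. The substance is constructing the continuously graded maps $F:PY\multimap PX$ and $G:PX\multimap PY$ of Definition~\ref{def:dihomotopyequiv}. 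Since $Y=\{*\}$, $PY$ is a single point and $PY(f(x),f(x'))$ is a point for every $(x,x')\in\Gamma_X$; so $G_{y,y'}$ is trivially the unique map to a point and its homotopy-inverse condition with $Pg_{*,*}$ is vacuous up to checking $PX(x_0,x_0)$ is contractible --- which is exactly where I would need to be careful. The map $F$ is the interesting one: $F_{x,x'}:PY(*,*)\to PX(x,x')$ must assign, to the unique point of $PY(*,*)$, a d-path from $x$ to $x'$, varying continuously in $(x,x')\in\Gamma_X$. But this is precisely a continuous section of $\chi$! So I would \emph{set} $F_{x,x'}(\ast) = s(x,x')$, using the hypothesis that $\chi$ has a continuous global section.

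The remaining obligations are: (i) verify that $(Pf_{x,x'},F_{x,x'})$ is a homotopy equivalence between $PX(x,x')$ and the point $PY(f(x),f(x'))$ --- equivalently, that each d-path space $PX(x,x')$ is contractible; and (ii) verify the analogous statement for $PY(y,y')$ and $PX(g(y),g(y'))=PX(x_0,x_0)$, which again reduces to $PX(x_0,x_0)$ being contractible. \textbf{This contractibility of the directed path spaces $PX(x,x')$ is the main obstacle} and is where I expect the real work to lie. The idea for it: given the global section $s$ and the contraction $H$ of $X$, build an explicit contraction of $PX(x,x')$ onto the chosen path $s(x,x')$. One natural strategy is to ``push'' an arbitrary d-path $p\in PX(x,x')$ toward $s(x,x')$ by a two-step move --- first reparametrize/concatenate using $s$ and the constant paths at $x$ and $x'$ (available by the d-space axioms), then use the contractibility of $X$ to slide the free part; but since $H$ is not directed one cannot slide within $PX$ directly. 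Instead I would exploit that a contractible ENR $X$ together with a section of $\chi$ forces $\Gamma_X=X\times X$ and that concatenation $PX(x,z)\times PX(z,z')\to PX(x,z')$ together with the section gives enough maneuvering room; concretely, the map $p\mapsto \bigl(\text{linear reparametrization squeezing }p\text{ into }[0,\tau]\bigr)\ast\bigl(s(p(\tau),x')\text{ on }[\tau,1]\bigr)$, as $\tau$ runs from $1$ down to $0$, should give a homotopy in $PX(x,x')$ from $p$ to $s(x,x')$, continuous in all data because $s$ is continuous and concatenation and reparametrization are continuous in the compact-open topology. I would write this homotopy out carefully, check its endpoints and continuity, and observe it simultaneously handles the case $x=x'=x_0$. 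Once $PX(x,x')$ is known to be contractible for all $(x,x')\in\Gamma_X$, conditions (i) and (ii) follow since any map between contractible spaces is a homotopy equivalence, the continuous grading of $F$ is witnessed by $s$, the grading of $G$ is trivial, and all three bullets of Definition~\ref{def:dihomotopyequiv} are met, completing the construction of the simple dihomotopy equivalence $X\simeq\{*\}$.
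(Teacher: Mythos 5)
Your argument is essentially the paper's: the converse direction is obtained by reading a continuous section off the graded map $\{*\}\multimap PX$ supplied by dicontractibility (you route this through Proposition \ref{dihominvariance}, the paper does it directly, to the same effect), and the forward direction by taking $f:X\to\{*\}$, $g$ the inclusion of a point, letting the section $s$ itself define the graded map $F$, and contracting each $PX(x,x')$ onto $s(x,x')$ by a squeezing homotopy built from $s$, concatenation and reparametrization (the paper squeezes symmetrically from both endpoints, you from one end only; this is cosmetic, and both versions need the same ``extend by continuity'' care at the degenerate parameter value). One correction: your parenthetical claims that dicontractibility, or contractibility plus a section of $\chi$, force $\Gamma_X=X\times X$ are false --- the directed interval $\diseg$ is dicontractible yet $\Gamma_{\diseg}=\{(x,y)\,:\,x\le y\}$ --- but this is harmless here, since a section of the d-paths map is by definition only required over $\Gamma_X$.
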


\begin{proof}
As $X$ is contractible, we have $f: X \rightarrow \{a_0\}$ (the constant map) and $g: \{a_0\} \rightarrow X$ (the inclusion)
which form a (classical) homotopy equivalence. Trivially, $f$ and $g$ are dmaps, and form a
d-homotopy equivalence.  

Suppose that we have a continuous section $s$ of $\chi$. 
There is an obvious inclusion map $i~: \{s(a,b)\} \rightarrow \pth{X}(a,b)$, which is 
graded in $a$ and $b$. Define $R$ to be this map. 
Now the constant map $r~: \pth{X}(a,b) \rightarrow \{s(a,b)\}$ is a retraction map for $i$. 

We define 
$$ \begin{array}{lccc}
H \ : & \pth{X}\times [0,1] & \rightarrow & \pth{X} \\
& (u,t) & \rightarrow & \mbox{$v$ s.t. $\left\{\begin{array}{rcll}
v(x) & = & u(x) & \mbox{if $0\leq x \leq \frac{t}{2}$} \\
v(x) & = & s\left(u\left(\frac{t}{2}\right),u\left(1-\frac{t}{2}\right)\right)\left(\frac{x-\frac{t}{2}}{1-t}\right) & \mbox{if $\frac{t}{2} \leq x \leq 1-\frac{t}{2}$}\\
v(x) & = & u(x) & \mbox{if $1-\frac{t}{2}\leq x \leq 1$} \\
\end{array}\right.$}\\
\end{array}
$$
\noindent ($H(u,t)$ is extended by continuity for $t=1$ as being equal to $u$)

As concatenation and evaluation are continuous and as $s$ is continuous in both arguments
$H$ is continuous in 
$u \in PX$ and in $t$. $H$ induces families $H_{a,b} \ :  \pth{X}(a,b) \times [0,1] \rightarrow \pth{X}(a,b)$, and because $H$ is continuous in $u$ in the compact-open
topology, this family $H_{a,b}$ is continuous in $a$ and $b$ in $X$. 

Finally, we note that 
$H(u,1)=u$ and $H(u,0)=s(u(0),u(1))=i\circ r(u)$. Hence $r$ is a deformation retraction and
$PX(a,b)$ is homotopy equivalent to $\{s(a,b)\}$ and has the homotopy type we expect
(is contractible for all $a$ and $b$), meaning that $R$ is a (graded) homotopy equivalence.

Conversely, suppose $X$ is dicontractible. 
We have in particular a continuous map %family (in $a$, $b$ in $X$)
$R \ : \ \{ * \} \rightarrow \pth{X}$, which is graded in $(a,b)\in \Gamma_X$. Define 
$s(a,b)=R_{a,b}(*)$, this is a continuous section of $\chi$. 
%\ForAuthors{Something to be looked upon - we had before the continuity in the grading, whatever that means...and
%we do not have it any longer...}
\end{proof}

\paragraph{Remark : }
Sometimes, we do not know right away, in the theorem above, that $X$ is contractible. But instead, 
there is  an initial state in $X$, i.e. a state $a_0$
from which 
every point of $X$ is reachable. % typical of dicoverings
Suppose then that, as in the Theorem above, $\chi$ has a continuous section $s : \Gamma_X \rightarrow PX$. 
Consider $s'(a,b)=s^{-1}(a_0,a)*s(a_0,b)$ the concatenation of the inverse dipath,
going from $a$ to $a_0$, with the dipath going from $a_0$ to $b$~: this is a continuous
path from $a$ to $b$ for all $a$, $b$ in $X$. Now, $s'$ is obviously continuous since
concatenation, and $s$, are. By a classical theorem \cite{farber}, this implies that
$X$ is contractible and the rest of the theorem holds. 

\begin{example}
{\rm Direct applications of Proposition \ref{dihominvariance} show that~: 
\begin{itemize}
\item Directed $n$-tori $\diOo^n$ and $\diOo^m$ 
cannot be simply dihomotopically equivalent when $n \neq m$. 
\item Directed $n$-tori $\diOo^n$ cannot be dihomotopically equivalent to 
any directed graph for $n\geq 3$. %, similarly for $\diOn$ for $n\geq 3$. 
\end{itemize}
}
%\ForAuthors{Add something quick on natural homology?}
\end{example}

\subsection{Natural homology, and dicontractibility}

%\ForAuthors{EG : I will rework on this shortly}

We now come to make a first connection between some invariants that have been 
introducted in directed topology (see e.g. \cite{naturalhomology}), like natural homology, 
\cite{Eilenberg}. 

We first recap the construction of such invariants. 

%\begin{definition}[\cite{Raussen}] 
A \emph{monotonic reparametrization $r$} is a monotonic continuous surjection from $[0,1]$ to $[0,1]$.

Let $X$ be a pospace, i.e. a topological space together with a closed order
$\leq \subseteq X \times X$. $X$ is then a particular d-space with the directed
paths being the continuous and increasing maps from the unit segment, with the standard
ordering, to $X$. 

Let now $p$ and $q$ two dipaths from $a$ to $b$ in $X$. 
We say that \emph{$p$ is reparametrized in $q$} if there exists a monotonic reparametrization $\gamma$ such that $p\circ\gamma = q$. The \emph{trace} of $p$, written $\langle p \rangle$ is the equivalence class modulo monotonic reparametrization.
%\end{definition}

Now we can put together all dipaths from point $a$ to point $b$, modulo monotonic reparame\-trization in a topological space:

%\begin{definition}[\cite{Raussen}] 
%\label{tracespace}
Let $X$ be a pospace and $a$ and $b \in X$. We topologize 
the set of traces of dipaths from $a$ to $b$, 
with the compact-open topology. Its quotient $\trace{X}{a}{b}$ 
by reparametrization, with the quotient topology is called the \emph{trace space in $X$ from $a$ to $b$} (see \cite{Raussen}).
%\end{definition}

\begin{definition}
\label{topologicaltracecat}
We define $\T_X$ to be the category whose:
\begin{itemize}
	\item objects are traces of $X$
	\item morphisms (also called extensions) from $\langle p \rangle$ to $\langle q \rangle$ with $p$, a dipath from $x$ to $y$ and $q$, one from $x'$ to $y'$ are pairs of traces $(\langle \alpha \rangle,\langle \beta \rangle)$ such that $\langle q \rangle = \langle \alpha \star p \star \beta \rangle$
\end{itemize}
We then define $\map{\overrightarrow{T}_{*}(X)}{\T_X}{\textbf{Top}_*}$ which maps:
\begin{itemize}
	\item every trace $\langle p \rangle$ with $p$ from $x$ to $y$ to the pointed space $(\trace{X}{x}{y},\langle p \rangle)$
	\item every extension $(\langle \alpha \rangle, \langle \beta \rangle)$ with $\alpha$ dipath from $x'$ to $x$ and $\beta$ dipath from $y$ to $y'$ to the continuous map $\map{\langle \alpha \star \_ \star \beta \rangle}{\trace{X}{x}{y}}{\trace{X}{x'}{y'}}$ which maps $\langle p \rangle$ to $\langle \alpha \star p \star \beta \rangle$.
\end{itemize} 
\end{definition}

%\begin{definition}[Natural homotopy]
%\label{naturalhomotopy}
%We define for $n\geq 1$, $\map{\sysp{n}{X}}{\T_X}{\M}$ (where $\M$ is either $\textbf{Set}$, $\textbf{Grp}$ or $\ab$) composing $\overrightarrow{T}_{*}(X)$ with the $(n-1)^{th}$ homotopy group (set if $n=1$) functor $\pi_{n-1}$.
%\end{definition}
We can now define the natural homology functors~: 

\begin{definition}[Natural homology]
\label{naturalhomology}
We define for $n\geq 1$, $\map{\sysh{n}{X}}{\T_X}{\M}$ (where $\M$ is $\ab$) composing $\overrightarrow{T}_{*}(X)$ with the $(n-1)^{th}$ homology group functor $H_{n-1}$. 
\end{definition}

\begin{remark}
$\T_X$ is actually the category of factorization (or twisted arrow category) 
%\cite{maclane})
of the category whose objects are points of $X$ and morphisms are traces and this makes $\overrightarrow{T}_{*}(X)$ into a natural system in the sense of \cite{bauwir}.
\end{remark}

\begin{example} 
\label{exa:syshom} {\rm (taken from \cite{Eilenberg})
We consider the pospace $\diS1$ again, which is made up
of two directed segments $a$ and $b$ where there initial points are
identified, and their final points are identified too. In the following
picture, we distinguish two particular points $x$
and $y$ on $a$, with $x < y$ (respectively $x'$ and $y'$ on $b$, with
$x' < y'$), which we will use
to describe the category of factorization $\T_{a+b}$ as well as the
natural homology $\sysh{n}{a+b}$. 

\begin{center}
	\begin{tikzpicture}
		\node (1) at (0,-0.3) {\scriptsize{$0$}};
		\node (2) at (2,-0.3) {\scriptsize{$1$}};
		\node (a) at (1,0.6) {\scriptsize{$a$}};
		\node (b) at (1,-0.6) {\scriptsize{$b$}};
		\node (x) at (0.5,0.6) {\scriptsize{$x$}};
		\node (y) at (1.5,0.6) {\scriptsize{$y$}};
		\node (x') at (0.5,-0.6) {\scriptsize{$x'$}};
		\node (y') at (1.5,-0.6) {\scriptsize{$y'$}};
		\draw (0,0) to [bend left = 45] (2,0);
		\draw (0,0) to [bend right = 45] (2,0);
		\draw (0.5,0.2) -- (0.5,0.4);
		\draw (1.5,0.2) -- (1.5,0.4);
		\draw (0.5,-0.2) -- (0.5,-0.4);
		\draw (1.5,-0.2) -- (1.5,-0.4);
		\draw (0,-0.1) -- (0,0.1);
		\draw (2,-0.1) -- (2,0.1);
		\draw[->,thick] (0.3,0.8) to [bend left = 45] (1.7,0.8);
		\draw[->,thick] (0.3,-0.8) to [bend right = 45] (1.7,-0.8);
	\end{tikzpicture}
	\end{center}

The description of $\T_{a+b}$ is now as follows. Objects of $\T_{a+b}$ are
dipaths, which can be either:
\begin{itemize}
\item constant dipaths, $0$, $x$, $y$, $x'$, $y'$, $1$, 
for all points $x$, $y$, $x'$, $y'$ that we chose to distinguish 
in the picture
of $a+b$.
\item non constant and non maximal 
dipaths of the form $[0,x]$, $[x,y]$, $[y,1]$ etc.
\item maximal dipaths $a$ and $b$
\end{itemize}

We chose below to draw a picture of a subcategory of $\T_{a+b}$, where
$x$, $y$, $x'$ and $y'$ are any distinguished points of $a$ and $b$ 
as discussed before. The extension morphisms in $\T_{a+b}$ are pictured
below as arrows ; for instance, there is an extension morphism from 
dipath $[x,y]$ to $[0,y]$ and to $[x,1]$, among other extension morphisms: 

	\begin{center}
	\begin{tikzpicture}[scale = 0.8]
		%\node at (3,-0.5) {\rotatebox{90}{\huge{\{}}};
		\node (0) at (0,0) {\scriptsize{$0$}};
		%\node (1) at (3,0) {$1$};
		\node (x) at (1,0) {\scriptsize{$x$}};
		\node (y) at (2,0) {\scriptsize{$y$}};
		\node (0x) at (0.5,1) {\scriptsize{$[0,x]$}};
		\node (y1) at (2.5,1) {\scriptsize{$[y,1]$}};
		\node (xy) at (1.5,1) {\scriptsize{$[x,y]$}};
		\node (0y) at (1,2) {\scriptsize{$[0,y]$}};
		\node (x1) at (2,2) {\scriptsize{$[x,1]$}};
		\node (a) at (1.5,3) {\scriptsize{$a$}};
		
		%\node (0) at (4,0) {$0$};
		\node (1) at (6,0) {\scriptsize{$1$}};
		\node (x') at (4,0) {\scriptsize{$x'$}};
		\node (y') at (5,0) {\scriptsize{$y'$}};
		\node (0x') at (3.5,1) {\scriptsize{$[0,x']$}};
		\node (y'1) at (5.5,1) {\scriptsize{$[y',1]$}};
		\node (x'y') at (4.5,1) {\scriptsize{$[x',y']$}};
		\node (0y') at (4,2) {\scriptsize{$[0,y']$}};
		\node (x'1) at (5,2) {\scriptsize{$[x',1]$}};
		\node (b) at (4.5,3) {\scriptsize{$b$}};
		
		%\draw[dotted] (0) -- (x);
		\draw[dotted] (y) -- (x);
		\draw[dotted] (y') -- (x');
		%\draw[dotted] (y) -- (1);
		
		\draw[->] (0) -> (0x);
		\draw[->] (x) -> (0x);
		\draw[->] (1) -> (2.5,0.7);
		\draw[->] (y) -> (y1);
		\draw[->] (x) -> (xy);
		\draw[->] (y) -> (xy);
		\draw[->] (0x) -> (0y);
		\draw[->] (xy) -> (0y);
		\draw[->] (y1) -> (x1);
		\draw[->] (xy) -> (x1);
		\draw[->] (0y) -> (a);
		\draw[->] (x1) -> (a);
		
		\draw[->] (0) -> (3.5,0.7);
		\draw[->] (x') -> (0x');
		\draw[->] (1) -> (y'1);
		\draw[->] (y') -> (y'1);
		\draw[->] (x') -> (x'y');
		\draw[->] (y') -> (x'y');
		\draw[->] (0x') -> (0y');
		\draw[->] (x'y') -> (0y');
		\draw[->] (y'1) -> (x'1);
		\draw[->] (x'y') -> (x'1);
		\draw[->] (0y') -> (b);
		\draw[->] (x'1) -> (b);
	\end{tikzpicture}
	\end{center}

Now, we can picture a subdiagram of $\sysh{1}{a+b}$, by applying the
homology functor on the trace spaces from the starting point to the end
point of the dipaths, objects of $\T_{a+b}$. For instance, the trace
space $\trace{a+b}{x}{y}$ (respectively 
$\trace{a+b}{0}{y}$) corresponding to dipath $[x,y]$ 
(respectively $[0,y]$) in the diagram
above, is just a point, hence has zeroth homology group equal to $\Z$
(respectively $\Z$). 
All other zeroth homology groups are trivial with the exception of 
the ones corresponding to the two maximal dipaths (up to reparametrization) 
$a$ and $b$, going from $0$ to $1$. In that case, 
$\trace{a+b}{0}{1}$ is composed of two points, that we can identify with
$a$ and $b$, and has $\Z^2$ (or $\Z[a,b]$ with the identification we just
made) as
zeroth homology. Now the extension morphism from $[0,y]$ to $a$ induces
a map in homology which maps the only generator of $H_0(\trace{a+b}{0}{y})$
to generator $a$ in $\Z[a,b]$ as indicated in the picture below: 

	\begin{center}
	\begin{tikzpicture}[scale = 0.8]
		%\node at (3,-0.5) {};
		\node (0) at (0,0) {\scriptsize{$\mathbb{Z}$}};
		%\node (1) at (3,0) {$1$};
		\node (x) at (1,0) {\scriptsize{$\mathbb{Z}$}};
		\node (y) at (2,0) {\scriptsize{$\mathbb{Z}$}};
		\node (0x) at (0.5,1) {\scriptsize{$\mathbb{Z}$}};
		\node (y1) at (2.5,1) {\scriptsize{$\mathbb{Z}$}};
		\node (xy) at (1.5,1) {\scriptsize{$\mathbb{Z}$}};
		\node (0y) at (1,2) {\scriptsize{$\mathbb{Z}$}};
		\node (x1) at (2,2) {\scriptsize{$\mathbb{Z}$}};
		\node (a) at (1.5,3) {\scriptsize{$\mathbb{Z}[a,b]\simeq\mathbb{Z}^2$}};
		
		%\node (0) at (4,0) {$0$};
		\node (1) at (6,0) {\scriptsize{$\mathbb{Z}$}};
		\node (x') at (4,0) {\scriptsize{$\mathbb{Z}$}};
		\node (y') at (5,0) {\scriptsize{$\mathbb{Z}$}};
		\node (0x') at (3.5,1) {\scriptsize{$\mathbb{Z}$}};
		\node (y'1) at (5.5,1) {\scriptsize{$\mathbb{Z}$}};
		\node (x'y') at (4.5,1) {\scriptsize{$\mathbb{Z}$}};
		\node (0y') at (4,2) {\scriptsize{$\mathbb{Z}$}};
		\node (x'1) at (5,2) {\scriptsize{$\mathbb{Z}$}};
		\node (b) at (4.5,3) {\scriptsize{$\mathbb{Z}[a,b]\simeq\mathbb{Z}^2$}};
		
		\node at (0.7,2.5) {\scriptsize{$1 \mapsto a$}};
		\node at (5.3,2.5) {\scriptsize{$1 \mapsto b$}};
		
		%\draw[dotted] (0) -- (x);
		\draw[dotted] (y) -- (x);
		\draw[dotted] (y') -- (x');
		%\draw[dotted] (y) -- (1);
		
		\draw[->] (0) -> (0x);
		\draw[->] (x) -> (0x);
		\draw[->] (1) -> (2.5,0.7);
		\draw[->] (y) -> (y1);
		\draw[->] (x) -> (xy);
		\draw[->] (y) -> (xy);
		\draw[->] (0x) -> (0y);
		\draw[->] (xy) -> (0y);
		\draw[->] (y1) -> (x1);
		\draw[->] (xy) -> (x1);
		\draw[->] (0y) -> (a);
		\draw[->] (x1) -> (a);
		
		\draw[->] (0) -> (3.5,0.7);
		\draw[->] (x') -> (0x');
		\draw[->] (1) -> (y'1);
		\draw[->] (y') -> (y'1);
		\draw[->] (x') -> (x'y');
		\draw[->] (y') -> (x'y');
		\draw[->] (0x') -> (0y');
		\draw[->] (x'y') -> (0y');
		\draw[->] (y'1) -> (x'1);
		\draw[->] (x'y') -> (x'1);
		\draw[->] (0y') -> (b);
		\draw[->] (x'1) -> (b);
	\end{tikzpicture}
	\end{center}
}
\end{example}

%\ForAuthors{Bisimulation, and dicontractibility implies natural homology is bisimulation equivalent
%to the natural system on the terminal category.}

We now define bisimulation as in \cite{naturalhomology}. A bisimulation between functor categories into Abelian groups 
$P \ : \ F \rightarrow Ab$, 
and 
$Q \ : \ G \rightarrow Ab$ is
a ``relation'' labelled with such isomorphisms of Abelian groups, i.e.
is a set of triples $$(\sigma,\eta,\tau)$$ \noindent which is hereditary in 
the following sense~:  

\begin{itemize}
\item
for all $\langle \alpha,\beta\rangle \in F$ from $x$ to $x'$, 
if $(x, \eta, y) \in R$, 
there exists $\langle \gamma,\delta \rangle \in 
  G$ from $y$ to $y'$  
such that 
$(x',\eta', y')\in R$
%\ForAuthors{Maybe necessary to have $\gamma$ and $\delta$ being $m(\alpha)$ or ...?}
and such that the following diagram commutes~: 

\begin{center}
  \begin{tikzpicture}[scale=1]
\matrix (m) [matrix of math nodes,row sep=3em,column sep=4em,minimum width=2em]
  {
     \scriptstyle P(x) & \scriptstyle Q(y) \\
     \scriptstyle P(x') & \scriptstyle Q(y') \\};
  \path[-stealth]
    (m-1-1) edge node [left] {$\langle \alpha,\beta \rangle$} (m-2-1)
            edge node [above] {${\eta}$} (m-1-2)
(m-2-1) edge node [above] {${\eta}$} (m-2-2)
    (m-1-2) edge node [dashed,right] {$\langle \gamma,\delta \rangle$} (m-2-2);
\end{tikzpicture}
\end{center}
\item 
for all 
$\langle \gamma,\delta\rangle \in G$ from $y$ to $y'$, 
if $(x, \eta, y) \in R$, 
there exists $\langle \alpha,\beta\rangle \in F$ from $x$ to $x'$
such that 
$(x',\eta', y') \in R$ 
and 
such that the following diagram, as above, commutes 
up 
to homotopy
\begin{center}
  \begin{tikzpicture}[scale=1]
\matrix (m) [matrix of math nodes,row sep=3em,column sep=4em,minimum width=2em]
  {
     \scriptstyle P(x) & \scriptstyle Q(y) \\
     \scriptstyle P(x') & \scriptstyle Q(y') \\};
  \path[-stealth]
    (m-1-1) edge node [left] {$\langle \alpha,\beta\rangle$} (m-2-1)
            edge node [above] {${\eta}$} (m-1-2)
(m-2-1) edge node [above] {${\eta'}$} (m-2-2)
    (m-1-2) edge node [dashed,right] {$\langle \gamma,\delta\rangle$} (m-2-2);
\end{tikzpicture}
\end{center} 
\end{itemize}

The main connection with directed topological complexity is as follows~: 

\begin{proposition}
Let $X$ be a d-space. $X$ has directed topological complexity of one (i.e. is
dicontractible) implies that its natural homologies $\sysh{n}{X}$ are all 
bisimulation equivalent to either, $1_\Z \ : \ {\bf 1} \rightarrow \Z$ for
$n=1$, or to $1_0 \ : \ {\bf 1} \rightarrow 0$ for $n > 1$, defined as~:
\begin{itemize}
\item {\bf 1} is the terminal category, with one object 1 and one morphism (the identity
on 1)
\item $1_\Z(1)=\Z$, $1_0(1)=0$.
\end{itemize}
\end{proposition}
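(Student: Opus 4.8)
The plan is to exploit Theorem~\ref{thm:contractible} together with Proposition~\ref{dihominvariance} to reduce the statement to an explicit computation of the natural homology of a point, and then to exhibit the required bisimulation by hand. Concretely, if $\diTC{X}=1$, then $\chi$ admits a continuous section $s:\Gamma_X\to PX$; the ``Remark'' after Theorem~\ref{thm:contractible} (or the theorem itself, if one already knows $X$ is contractible) shows that $X$ is then dicontractible, i.e.\ $X$ is simply dihomotopically equivalent to a point $\{*\}$ via some data $(f,g,F,G)$. The first step is thus to record that dicontractibility gives us, for every $(x,x')\in\Gamma_X$, a homotopy equivalence $PX(x,x')\simeq P\{*\}(f(x),f(x'))$, and that $P\{*\}$ is a single point; hence every trace space $\trace{X}{x}{x'}$ is contractible (nonempty exactly when $(x,x')\in\Gamma_X$).

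\emph{Second step: compute the natural homology of the point.} For $X_0=\{*\}$, the trace category $\T_{\{*\}}$ has a unique object (the constant dipath) and only the identity morphism, so $\T_{\{*\}}\cong{\bf 1}$; the trace space $\trace{\{*\}}{*}{*}$ is a point, so $H_0$ of it is $\Z$ and $H_{n-1}$ is $0$ for $n>1$. Therefore $\sysh{1}{\{*\}}=1_\Z$ and $\sysh{n}{\{*\}}=1_0$ for $n>1$, exactly the target functors in the statement. So it suffices to prove that $\sysh{n}{X}$ is bisimulation equivalent to $\sysh{n}{\{*\}}$.

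\emph{Third step: build the bisimulation.} Define $R\subseteq \mathrm{Ob}(\T_X)\times\{\text{iso's}\}\times\{1\}$ to consist of all triples $(\langle p\rangle,\,\eta_{\langle p\rangle},\,1)$ where $p$ is a dipath from some $x$ to some $x'$ and $\eta_{\langle p\rangle}$ is the isomorphism $H_{n-1}(\trace{X}{x}{x'})\xrightarrow{\ \sim\ }H_{n-1}(\mathrm{pt})$ induced (via $Pf$, $F$, and the homotopy equivalence they form) by the dicontraction; when $n>1$ this is the unique map $0\to 0$, and when $n=1$ it is the canonical identification $\Z\to\Z$ coming from the contractibility of $\trace{X}{x}{x'}$. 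One then checks the two hereditarity conditions: given an extension $\langle\alpha,\beta\rangle$ in $\T_X$ from $\langle p\rangle$ to $\langle q\rangle$, the only available morphism in ${\bf 1}$ is the identity $1\to1$, and the required square of Abelian groups commutes because the maps induced on $H_{n-1}$ by extensions are compatible with the natural-system structure of $\overrightarrow{T}_*(\{*\})$ (which is trivial) via the naturality of the homotopy equivalences $F_{x,x'}$, $Pf_{x,x'}$ over $\Gamma_X$; conversely, any morphism $1\to1$ in ${\bf 1}$ is matched by the identity extension on $\langle p\rangle$, and commutativity up to homotopy is immediate. The continuous grading of $F$ and $G$ guarantees that these isomorphisms are compatible as $(x,x')$ varies, which is what makes the family of $\eta$'s a genuine labelling of a hereditary relation rather than just a pointwise collection.

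\emph{The main obstacle} I expect is purely bookkeeping-theoretic: making precise how the zeroth homology (resp.\ higher homology) of the trace spaces $\trace{X}{x}{x'}$ is canonically identified with that of the point, uniformly in $(x,x')$, and verifying that this identification is respected by \emph{all} extension morphisms — i.e.\ that the square in the definition of bisimulation commutes on the nose for $n=1$ and up to homotopy for $n>1$. This amounts to unwinding that the homotopy equivalences $(Pf_{x,x'},F_{x,x'})$ intertwine the concatenation-induced maps $\langle\alpha\star\_\star\beta\rangle$ on trace spaces with the (trivial) corresponding maps on $P\{*\}$, which follows from $f$ being a d-map compatible with concatenation, but needs to be spelled out carefully. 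There is also a minor subtlety in matching up ``traces'' (reparametrization classes of dipaths) with the path space $PX(x,x')$ used in the definition of simple dihomotopy equivalence; since a trace space is a quotient of $PX(x,x')$ and the relevant homotopy equivalences descend to this quotient, this is routine but should be mentioned.
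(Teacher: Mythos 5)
Your proposal is correct and follows essentially the same route as the paper: use Theorem~\ref{thm:contractible} to deduce that every trace space $\trace{X}{x}{y}$ is contractible, conclude that $\sysh{1}{X}$ is constant with value $\Z$ and $\sysh{n}{X}$ is constant with value $0$ for $n>1$, and then check that the relation matching every object of $\T_X$ to the unique object of ${\bf 1}$ is hereditary. The paper leaves the heredity check as ``a simple exercise,'' whereas you spell out the canonical identifications and the compatibility with extension morphisms; this extra bookkeeping is sound and does not change the argument.
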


\begin{proof}
Suppose that $X$ has directed topological complexity of 1. Then by Theorem 
\ref{thm:contractible}, all trace spaces $\trace{X}{x}{y}$ are contractible, for all
$(x,y) \in \Gamma_X$, hence
$\sysh{1}{X}(x,y)=\Z$ and $\sysh{n}{X}(x,y)=0$ for $n > 1$. Therefore the natural
homology functors are all constant, either with value $\Z$ or with value 0, and it
is a simple exercise to see that the relation between $\T_X$ and ${\bf 1}$ which 
relates all objects of $\T_X$ to the only object 1 of ${\bf 1}$ is hereditary, hence
is a bisimulation equivalence.
\end{proof}

\begin{example}
{\rm 
We get back to example $\diS1$. Its first homology functor was calculated in Example
\ref{exa:syshom} and is not a constant functor (it contains $\Z^2$ and $\Z$ in its
image). Therefore $\diS1$ cannot have directed topological complexity of 1. It is also
easy to see that the first natural homology functor of $\diOo$ is $\Z^\N$ between two
equal points and hence
cannot have directed topological complexity of 1. 
}
\end{example}

%\section{Conclusion and future work}
%
%(...)

\end{document}